\newcommand{\R}{\mathbb R}
\newcommand{\eps}{\epsilon}
\newcommand{\vp}{\vec{p}}
\newcommand{\vq}{\vec{q}}
\newcommand{\vm}{\vec{m}}
\newcommand{\Jm}{J^{-}}
\newcommand{\vx}{\vec{x}}
\newcommand{\vpsi}{\vec{\psi}}
\newcommand{\vM}{\vec{M}}
\newcommand{\vA}{\vec{A}}
\newcommand{\vt}{\vec{t}}
\newcommand{\bA}{\bar{A}}
\newtheorem{theorem}{Theorem}
\newtheorem{lemma}{Lemma}[section]
\newtheorem{prop}{Proposition}[section]
\newtheorem{cor}{Corollary}[section]
\newtheorem{remark}{Remark}[section]
\newtheorem{defi}{Definition}[section]
\newtheorem{example}{Example}[section]
\newtheorem{acknowledgment*}{Acknowledgment}
\newtheorem{assumption}{Assumption}[section]
\newcommand{\be}{\begin{equation}}
\newcommand{\ee}{\end{equation}}
\begin{document}
\Large\begin{center} {\bf On optimal partitions, individual values and cooperative  games: Does  a wiser agent always  produce a higher value? }\end{center}
\normalsize
\begin{center} Gershon Wolansky\footnote{Department of Mathematics, Technion 32000, Israel}\end{center}
\begin{abstract}
We consider an optimal partition of  resources (e.g. consumers) between several agents, given utility functions ("wisdoms") for the agents and their capacities. This problem is a  variant of  optimal transport (Monge-Kantorovich) between two measure spaces   where one of the measures  is discrete (capacities) and the costs of transport are the wisdoms of the agents. We concentrate on the individual  value  for each agent under optimal partition  and show that, counter-intuitively, this value may decrease if the agent's  wisdom is increased.   Sufficient
 and necessary conditions    for the  monotonicity with respect to the wisdom functions of the individual values   will be given, independently of the other agents. The sharpness of these conditions is also discussed.
\par
Motivated by the above  we define a cooperative  game based on optimal partition and investigate conditions for stability of the grand coalition.
\end{abstract}

\section{Introduction}\label{OP}
Let $X$ be a set of  "consumers".  Let $I:= \{1, \ldots N\}$ be a set of "agents".
 For  each  agent  $i\in I$ we associate a  real  valued function $\psi_i$ on $X$. This symbolizes the agent's  "wisdom":    $\psi_i(x)$ is the profit that agent  $i$ can make for a consumer $x$, if the latter  uses her service. We denote the "wisdom vector" $\vpsi:= (\psi_1, \ldots \psi_N):X\rightarrow \R^N_+$.
 \par
 Let $\mu$ be  the distribution of consumers in $X$. In particular $\mu(X)$ is the number of consumers.

  \par
  Each agent $i$ has a limited capacity  $M_i>0$.  This symbolizes the total number of consumers she  can serve. In other words,
an agent $i$ can serve a measurable set $A_i\subset X$ of consumers for which $\mu(A_i)\leq M_i$. Another assumption is that each consumer can hire {\em at most} one agent,  meaning  $\mu(A_i\cap A_j)=0$ if $i\not= j$.
\par
The profit made by an agent $i$ for her consumers  $A_i$
 is just   $$P_{\psi_i}(A_i):=\int_{A_i} \psi_id\mu \ . $$
Let $\vM:= (M_1, \ldots M_N)\in\R^N_+$, and
${\cal P}_{\vM}$ the set of all essentially disjoint measurable partitions  of $X$, that is
 $$ {\cal P}_{\vM}:= \{ \vA:=(A_1, \ldots A_N); \  A_i\subset X, \ \mu(A_i)\leq M_i  \ \ \text{and} \ \  \mu(A_i\cap A_j)=0 \ \ \text{if} \ \ i\not= j\in I \ . \}$$
 We distinguish between three  cases:
 \begin{description}
 \item{Over Saturation (OS)}: $\sum_{i\in I} M_i> \mu(X)$.
 \item{Saturation (S)}:  $\sum_{i\in I} M_i=\mu(X)$.
 \item {Under-Saturation (US)}: $\sum_{i\in I} M_i<\mu(X)$.
 \end{description}

\paragraph{First paradigm:}  The big brother
  (or "invisible hand", or macro "Keynesian distributer") splits the consumers $X$ between the agents in order to maximize their  total profit, taking account the capacity constraints. Let
\be\label{defSigma}\Sigma_{\vpsi}(\vM):= \sup_{\vA\in {\cal P}_{\vM}} \sum_{i\in I} P_{\psi_i}(A_i) \  \ee
be the maximal total profit of the consumers.
\par
 Suppose a maximizing   partition  $\vec{\bar{A}}(\vM,\vpsi):= \left(\bA_1(\vM,\vpsi), \ldots \bA_N(\vM,\vpsi)\right)$ exists. The profit made by  each agent $i$   for herself  and her consumers under this optimal partition  is her {\it Individual Value} (i.v):
 \be\label{barP}\bar{P}_i(\vM, \vpsi):= \int_{\bar{A}_i(\vM,\vpsi)}\psi_id\mu \ . \ee
\begin{remark}
A maximizing  partition can be interpreted as a {\it Pareto efficient} plan.
\end{remark}

\paragraph{Second paradigm:}
Assume there is no big brother. The market is free, and each consumer may choose his favorite agent to maximize his own  utility.   Each agent determines the price she collects for consulting a consumer. Let $p_i\in\R$ the price requested by agent $i$, $\vp:= (p_1, \ldots p_N)$. The utility of a consumer $x$ choosing the agent $i$ is, therefore, $\psi_i(x)-p_i$, if it is positive. If $\psi_i(x)-p_i\leq 0$ then the consumer will avoid the agent $i$, so he pays nothing and gets nothing in return. The net income of consumer $x$ choosing agent $i$ is, therefore, $[\psi_i(x)-p_i]_+:= \max(\psi_i(x)-p_i, 0)$.
\par
Note that we do not assume $p_i\geq 0$. In fact, we also takes into account  "negative prices"  (bonus, or bribe).
\par
The set of consumers who give up counseling by any of the agents is
\be\label{A+0} A^+_0(\vp):=\{x\in X;  \psi_i(x)-p_i\leq 0 \ \ \text{for any} \ i\in I\} \  \ee
while the  set of consumers who prefer agent  $i$ is, then
\be\label{A+i} A^+_i(\vp):= \{x\in X; \psi_i(x)-p_i\geq  \psi_j(x)-p_j \ \ \forall j\in I\}-A^+_0(\vp) \ .  \  \ee
Assumption \ref{funda} formulated in Section \ref{firstsec} below guarantees that $\{A^+_i(\vp)\}$ are essentially mutually disjoint  for any $\vp\in\R^N$, i.e. $\mu(A_i^+(\vp)\cap A^+_j(\vp))=0$ for any $i\not=j\in I\cup\{0\}$.
\begin{defi}
The vector  $\vp_0:= \{ p_{0,1} , \ldots p_{0,N}\}\in \R^N$ is  an  {\em equilibrium price} vector  with respect to $\vM$ if the set  of  the consumers who choose  agent $i$ meets her capacity constraint, namely
$$ \mu(A^+_i(\vp_0))\leq M_i \  \ \forall i\in I\cup \{0\}  $$
where $M_0:=[\mu(X)-\sum_{i\in I}M_i]_+$. In particular, $\vec{A}^+(\vp_0):=(A_1^+(\vp_0), \ldots A_N^+(\vp_0))\in {\cal P}_{\vM}$.
\end{defi}
\begin{remark}\label{full}
Note that $M_0=0$ in the S, OS cases, so an equilibrium price vector must satisfy $\mu(A_0(\vp_0))=0$, and  $\sum_{i\in I}\mu(A_i(\vp_0))=\mu(X)$.
\end{remark}
\begin{remark}
An equilibrium price $\vp_0$ can be interpreted as a Walrasian equilibrium.
\end{remark}
\paragraph{Big brother meets the Free market}: \
 Theorem \ref{old} in Section \ref{firstsec} below  claims that there exists an equilibrium price vector $\vp_0$ which realizes the optimal partition of the big brother.
  The corresponding partition $\vec{A^+}(\vp_0)\in \cal P_{\vM}$ is unique (even though the price vector $\vp_0$ is not necessarily so).
  \par
  Theorem \ref{old} is strongly related to the celebrated theory of optimal mass transport (Monge Kantorovich). It can also be considered as a special case of the second Welfare Theorem.
  \par
  The modern theory of optimal transport (OT)
 is a source of many research papers in mathematical analysis \cite{A1}, \cite{AGS}, \cite{Br}, probability \cite{R}, \cite{Rach}, \cite{cl}, \cite{AS}, geometry \cite{Vil1}, \cite{Vil2}, \cite{Gan}, \cite{Mc}, PDE \cite{Cap}, \cite{Ev},\cite{MT}, \cite{Egan} and many other fields in (and outside) pure Mathematics \cite{Ru1}, \cite{Ru2}, \cite{RYT}, \cite{GO}, \cite{carl}, \cite{CL}.
  It generalizes  the discrete {\it assignment problem} \cite{PD} to general (infinite dimensional) measure spaces.
\par
The object of OT is very intuitive \cite{mon}. We are given two measure spaces: An atomless probability  measure $\mu$ on  $X$, and (not necessarily atomless) probability measure $\nu$ on  a measure space $Y$. In addition there is  a measurable value function $c:X\times Y\rightarrow \R$.  An optimal solution is  an admissible, measurable   mapping $T:X\rightarrow Y$  which maximizes $T \mapsto\int_Xc(x, T(x))d\mu(x)$ subject to the constraint  $T_\#\mu=\nu$, namely $\mu(T^{-1}A)=\nu(A)$ for any measurable set $ A\subset Y$.
\par

Optimal partition (OP) is a  particular aspect of OT theory \cite{W}. It deals with the case where   one of the probability spaces, say $Y$, is a finite discrete one $Y\equiv I:=\{1, \ldots N\}$. In that case the value function $c$ is reduced to $N$ functions on $X$, named $\psi_i(x):=c(x, \{i\})$ ("wisdoms")   and $\nu(\{i\}):= M_i\geq 0$ ("capacities") satisfy $\sum_{i\in I}M_i=\mu(X)$.
 An admissible mapping $T:X\rightarrow I$ is a  partition $A_i:= T^{-1}(\{i\})\subset X$ into measurable subsets of $X$  verifying $\mu( A_i\cap A_j)=0$ for $i\not= j$ and $\mu\left( A_i\right)=M_i$.  The  Monge problem is reduced, in that case, to finding such a partition  which  optimizes  $\sum_{i\in I}\int_{A_i}\psi_id\mu$ under these constraints.
 \par
 The result indicated in Theorem \ref{old}, Section \ref{firstsec}, if restricted to  the saturated case $\sum_1^NM_i=\mu(X)$,   can be seen as a special case of the general Kantorovich duality Theorem \cite{Vil1}. The uniqueness result of Theorem \ref{old} follows from  Assumption \ref{funda} which is a generalization of the {\it twist condition} (c.f \cite{cham}).  The under-saturated case can also be included under this umbrella (c.f. Remark \ref{rem52}). The only new feature (perhaps) of this Theorem is in the over-saturated case.
\par
 The existence of a unique partition guaranteed in Theorem \ref{old}  enables us to define {\it individual values} generated by an agent (\ref{barP})
 (c.f. also Definition \ref{uni} in  Section \ref{indipro}). We interpret this individual value as the sum of the {\it profit of the agent and her consumers}. In Section \ref{differentinter} we will introduce alternative definitions of individual values, which are outside the scope of this paper.
 \par
  Next, we address the question of monotonicity of the individual values with respect to the wisdom of the agents. Intuitively, it seems that an increase of the wisdom of an agent  {\it without changing the wisdoms of the other agents, nor the capacity of any of the agents} will improve the competitiveness   of this agent and contribute to her individual value. It turns out  that this is not always the case. We obtain sharp estimates on the individual value under such assumptions  in Theorems \ref{new0}-\ref{new1} in Section \ref{indipro}.
 \par
The discussion on individual values leads naturally to study of {\it coalitions} and {\it cartels}. Suppose, under the big brother paradigm, that the agents in a  subset  $J\subset I$ decide to join forces and make a coalition. They offer any potential consumer the best agent (for him) in that coalition, and unite their capacities. This leads to a new partition problem where the agents in $\{i\in J\}$ are replaced by a single "super-agent" $J$ whose wisdom is $\psi_J(x):=\max_{i\in J}\psi_i(x)$, and whose capacity is $M_J:=\sum_{i\in J} M_i$. The big brother now chooses  the optimal partition between this super agent $J$ and the other agents (or other super-agents due to other coalitions). In particular, if a {\it grand} coalition $J=I$ is formed, then the "individual profit" of this coalition is just $P_I\equiv \int \psi_Id\mu:= \int \max_{i\in I} \psi_i(x)d\mu(x)$.
 \par
  Under the free market paradigm, the agents in a coalition $J$  decide to unify the price (i.e., create a cartel)  for  their services, so $p_i\equiv p_J$ for any $i\in J$. The free market  now selects the optimal price vector, under that constraint. It turns out that the partition due to this optimal price is the same as the partition due to  the big brother paradigm (Proposition \ref{propcoal}).
 \par
 If such a coalition/cartel  is declared, the natural question is how to distribute the individual value  of the coalition between its members?  In particular, if a grand coalition is formed, an agent $i$  will be happy  if her share in the profit  is not smaller than her share without joining the coalition, or her share in a different  (smaller) coalition $J\supset \{ i\}$.
 \par
  A cooperative game is defined by a  real valued function  $\nu$ acting on the subsets of the set of agents: $\nu:2^I\rightarrow \R$. For any $J\subset I$, $\nu(J)$ is  the reward for the  coalition $J$. In section \ref{ColPar} we define such a coalition game where $\nu(J)$ is the individual  value of a coalition $J$ in the optimal partition between two super-agents: $J$ and its complement $J^-:=I-J$. After reviewing some concepts from cooperative game theory (section \ref{gamerev}) we discuss the stability of the grand coalition in the special cases where all wisdoms are multiples of a single one, i.e.
  $\psi_j(x)\equiv \lambda_j\psi(x)$ where $\lambda_j>0$ are constants.
  \par
  In Section \ref{duality} we introduce the  tools for the proofs of Theorems \ref{old}-\ref{new1}.

 \section{Existence and uniqueness of optimal partitions}\label{firstsec}
\begin{assumption}\label{funda} \
 \begin{itemize}
\item  $X$ is  a compact  Hausdorff space,  $\mu$ a  probability Borel measure on $X$ and $\psi_i$, $i\in I$ are continuous, real valued on $X$ which are positive $\mu$ a.e, that is,  \be\label{nl} \sum_{i\in I}\mu(x; \psi_i(x)\leq 0)=0 \ . \ee
  \item   For any $r\in\R$ and any $i\not=j\in I$
    \be\label{post} \mu(x; \psi_i(x)-\psi_j(x)=r)=0\   \ee
\item In the US case we further assume
\be\label{post0}\mu(x; \psi_i(x)=r)=0 \   \ , \ \forall i\in I \  \ .  \ee
\end{itemize}
\end{assumption}

\begin{theorem} \ \label{old}
Under Assumption \ref{funda}
\begin{description}
\item {US:} In the under-saturation case ($\sum_{i\in I}M_i<\mu(X)$), let
 \be\label{Xi0}\Xi^0_{\vpsi}(\vp):=\int_X\max_{i\in I}[\psi_i(x)-p_i]_+d\mu(x) \ . \ee
  Then there exists a unique equilibrium price  vector $\vp_0$ which is a minimizer of  $\Xi^0_{\vpsi}(\vp)+\vp\cdot\vM$ on $\R^N$. Moreover, the associated partition $A_i^+(\vp_0)$ (\ref{A+i}) is the {\em unique} optimal partition which maximizes (\ref{defSigma})  and we get
\be\label{s000} \Sigma_{\vpsi}(\vM)= \min_{\vp\in\R^N} \Xi^0_{\vpsi}(\vp)+\vp\cdot\vM \equiv \Xi^0_{\vpsi}(\vp_0)+\vp_0\cdot\vM \ . \ee
\item {S:} In the saturation case ($\sum_{i\in I}M_i=\mu(X)$),
 there exists an equilibrium price  vector $\vp_0$   which satisfies
  (\ref{s000}) as well. This vector is unique only up to an additive shift $p_{0,i}\rightarrow p_{0,i}+\gamma$ where $\gamma\leq 0$.
   The associated partition $A_i^+(\vp_0)$ (\ref{A+i}) is the {\em unique} optimal partition which maximizes (\ref{defSigma}).

\item {OS:} In the over-saturation case ($\sum_{i\in I}M_i>\mu(X)$)  the existence and uniqueness are as in the saturated case, and the actual capacity $\hat{M}_i\leq M_i$ of each agent is {\em uniquely} determined by the maximizer of
\be\label{maximum} \vm \mapsto \Sigma_{\vpsi}(\vm):=  \min_{\vp\in \R^N} \Xi^0_{\vpsi}(\vp)+\vp\cdot\vm, \ \ \ \ \vm\leq \vM \ . \ee
\end{description}
In all the above, the unique optimal partition is given by (\ref{A+0},\ref{A+i}) where $A_0^+=\emptyset$ in the S, OS cases.

\end{theorem}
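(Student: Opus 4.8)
The plan is to prove Theorem \ref{old} by convex duality, reading
\be
\Phi_{\vpsi}(\vp):= \Xi^0_{\vpsi}(\vp)+\vp\cdot\vM
\ee
as the dual objective and $\vec{A^+}(\vp_0)$ as the primal optimizer. First I would record that $\vp\mapsto\Xi^0_{\vpsi}(\vp)$ is convex, being an integral of pointwise maxima of affine functions of $\vp$, so $\Phi_{\vpsi}$ is convex; under Assumption \ref{funda} the tie sets $\{\psi_i-p_i=\psi_j-p_j\}$ and (in US) $\{\psi_i=p_i\}$ are $\mu$-null by (\ref{post})--(\ref{post0}), so $\Phi_{\vpsi}$ is $C^1$ with
\be
\frac{\partial \Phi_{\vpsi}}{\partial p_i}(\vp)= M_i-\mu\big(A_i^+(\vp)\big),\qquad i\in I.
\ee
In the US case $\sum_iM_i<\mu(X)$ prevents escape along the diagonal $\vp=t(1,\dots,1)$, and together with $M_i>0$ this makes $\Phi_{\vpsi}$ coercive on $\R^N$; hence a minimizer $\vp_0$ exists and satisfies the first order condition $\mu(A_i^+(\vp_0))=M_i$ for every $i\in I$.

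The second step is the weak duality / complementary slackness computation. For an arbitrary $\vA\in{\cal P}_{\vM}$ and any $\vp$, the pointwise bound $\psi_i(x)\le \max_{j}[\psi_j(x)-p_j]_+ + p_i$, valid on $A_i$, gives after summation (using disjointness and nonnegativity of the integrand to extend the integral to all of $X$)
\be
\sum_{i\in I}\int_{A_i}\psi_i\,d\mu\ \le\ \Xi^0_{\vpsi}(\vp)+\sum_{i\in I}p_i\,\mu(A_i).
\ee
Specialising to $\vp=\vp_0$ and to the induced partition $\vec{A^+}(\vp_0)$, each inequality becomes an equality: by the definitions (\ref{A+0})--(\ref{A+i}) one has $\psi_i-p_{0,i}=\max_j[\psi_j-p_{0,j}]_+\ge 0$ on $A_i^+(\vp_0)$ and $\max_j[\psi_j-p_{0,j}]_+=0$ on $A_0^+(\vp_0)$, while $\mu(A_i^+(\vp_0))=M_i$ turns $\sum_ip_{0,i}\mu(A_i^+)$ into $\vp_0\cdot\vM$. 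Hence $\sum_i\int_{A_i^+(\vp_0)}\psi_i\,d\mu=\Phi_{\vpsi}(\vp_0)$, and since $\vec{A^+}(\vp_0)$ is admissible, $\Sigma_{\vpsi}(\vM)\ge\Phi_{\vpsi}(\vp_0)$.

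I expect the main obstacle to be closing the reverse inequality $\Sigma_{\vpsi}(\vM)\le\Phi_{\vpsi}(\vp_0)$, since the term $\sum_ip_i\mu(A_i)$ can be bounded by $\vp\cdot\vM$ (using $\mu(A_i)\le M_i$) only when the prices are nonnegative. This is exactly the place where positivity of the wisdoms (\ref{nl}) is essential, and where I would show $\vp_0>0$ in the US case: since $\sum_iM_i<\mu(X)$ the set $A_0^+(\vp_0)$ has positive measure, and for a.e.\ $y\in A_0^+(\vp_0)$ one has $\psi_i(y)-p_{0,i}\le 0$, whence $p_{0,i}\ge\psi_i(y)>0$. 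Equivalently, an optimal partition cannot leave a consumer unserved while some agent has spare capacity, as reassigning her yields a gain $\psi_i>0$; this forces saturation and positivity of every price. With $\vp_0>0$ the weak-duality bound gives $\sum_i\int_{A_i}\psi_i\le\Phi_{\vpsi}(\vp_0)$ for every admissible $\vA$, so $\Sigma_{\vpsi}(\vM)=\Phi_{\vpsi}(\vp_0)=\min_{\R^N}\Phi_{\vpsi}$, which is (\ref{s000}). Uniqueness of the optimizing partition follows from Assumption \ref{funda}: the twist condition (\ref{post}) makes the argmax defining $A_i^+(\vp_0)$ unique up to $\mu$-null sets, and any competing optimal partition must saturate the same equalities in the weak-duality chain, forcing it to coincide a.e.\ with $\vec{A^+}(\vp_0)$; uniqueness of $\vp_0$ itself comes from strict monotonicity of the demand map $\vp\mapsto(\mu(A_i^+(\vp)))_i$, guaranteed by (\ref{post}) and (\ref{post0}).

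Finally I would treat S and OS as perturbations of the above. In the saturated case $A_0^+$ is $\mu$-null (Remark \ref{full}), so the common-shift map $\vp\mapsto\vp+\gamma(1,\dots,1)$ leaves $\Phi_{\vpsi}$ invariant — because $\sum_iM_i=\mu(X)$ makes the change $-\gamma$ in $\Xi^0_{\vpsi}$ cancel the change $+\gamma$ in $\vp\cdot\vM$ — which accounts for non-uniqueness of $\vp_0$ up to a shift, the restriction $\gamma\le 0$ being precisely what keeps $A_0^+$ null; the positivity argument is no longer available, consistent with possibly negative prices. In the over-saturated case not all capacity can be filled, so I would introduce the used capacities $\vm\le\vM$ and note that $\vm\mapsto\Sigma_{\vpsi}(\vm)$ in (\ref{maximum}), being a minimum of functions affine in $\vm$, is concave; its maximizer $\hat{\vM}$ satisfies $\sum_i\hat M_i=\mu(X)$ (again because $\psi_i>0$ makes it profitable to serve every consumer while spare capacity remains), and strict concavity inherited from Assumption \ref{funda} makes $\hat{\vM}$, hence each $\hat M_i\le M_i$, unique. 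The problem then reduces to the saturated case with capacities $\hat{\vM}$, and applying the S result there completes the proof.
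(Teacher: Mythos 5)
Your treatment of the US case is sound and takes a genuinely different route from the paper's. You work with $\Xi^0_{\vpsi}$ directly on $\R^N$, get existence from coercivity, and close the duality gap by proving the equilibrium prices are strictly positive (from $\mu(A_0^+(\vp_0))=\mu(X)-\sum_iM_i>0$ together with (\ref{nl})), so that $\sum_i p_i\mu(A_i)\le\vp\cdot\vM$ holds for competing partitions. The paper instead folds US into S by adjoining a ``null agent'' with $\psi_0\equiv 0$, $p_0=0$ (Remark \ref{rem52}), works with $\Xi_{\vpsi}$ (no positive part), and sidesteps the sign issue entirely by restricting the supremum in (\ref{defSigma}) to partitions with $\mu(A_i)=M_i$ exactly (justified by Remark \ref{sineq} and positivity of the wisdoms), which turns $\sum_i p_i\mu(A_i)=\vp\cdot\vM$ into an identity needing no sign condition. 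Your positivity observation is a legitimate alternative for this sub-case.

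The genuine gap is the saturated case, which you dismiss as a ``perturbation'' but where every pillar of your US argument collapses: $\Phi_{\vpsi}$ is no longer coercive on $\R^N$ (its sublevel sets contain the downward rays $\{\vp+\gamma\vec{1}:\gamma\le 0\}$), so your existence proof fails; $\mu(A_0^+(\vp_0))=0$, so your proof that $\vp_0>0$ fails --- and indeed prices may be negative; and then the step $\sum_ip_i\mu(A_i)\le\vp\cdot\vM$ in the weak-duality chain is unjustified for competing partitions with $\mu(A_i)<M_i$. You explicitly acknowledge this last point (``the positivity argument is no longer available'') but supply no substitute, and no limiting argument from US to S is actually carried out. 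This is precisely where the paper does its real work: restriction to exact-capacity partitions, the normalization $\vp\cdot\vec{1}=0$ exploiting shift invariance of $\Xi_{\vpsi}(\vp)+\vp\cdot\vM$, and the compactness argument for minimizing sequences in (\ref{try1})--(\ref{lim0phat}), where positivity of the $M_i$ rules out escape to infinity. Since your OS case reduces to S, it inherits this gap; in addition, the strict concavity of $\vm\mapsto\Sigma_{\vpsi}(\vm)$, which you assert is ``inherited from Assumption \ref{funda}'', is exactly what the paper's Appendix proves (everywhere-differentiability of $\Xi^0_{\vpsi}$ forces strict convexity of its Legendre transform, via the subgradient-singleton argument), so it cannot simply be cited. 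Finally, your claim that uniqueness of $\vp_0$ in US follows from ``strict monotonicity of the demand map'' is unsupported: the map $\vp\mapsto(\mu(A_i^+(\vp)))_i$ can have flat spots consistent with (\ref{post})--(\ref{post0}) (e.g.\ when $X$ is disconnected and the range of $\psi_i$ omits an interval); to be fair, the paper's own proof establishes uniqueness only of the optimal partition, not of the price vector.
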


%
%
The proof of Theorem \ref{old} is given in Section \ref{funda}.

\begin{remark}\label{sineq}
In the saturation (S) or under saturation  (US) cases, the optimal partition $\bar{A}_i$ must meet the limit of the capacity constraint, $\mu(\bar{A}_i)=M_i$.  Indeed, if a strong inequality  $\hat{M}_i:=\mu(\bar{A}_i)<M_i$ holds for some $i\in I$, then the non-consumers set $\bar{A}_0:= X-\cup_{i\in I} \bar{A}_i$ satisfies  $\mu(\bar{A}_0)>0$ and the big brother could increase the total profit $\Sigma_{\vpsi}(\vM)$ by assigning the consumers from  $\bar{A}_0$ to  agent $i$, c.f.  (\ref{nl}).
\end{remark}
\begin{remark} $A^+_0(\vp)$  (\ref{A+0}) is a closed set since $\psi_i$ are continuous (Assumption \ref{funda}). The sets $A_i^+(\vp)$ are essentially disjoint by definition and assumption (\ref{post}), i.e $\mu(A_i^+(\vp)\cap A^+_j(\vp))=0$ for $i\not=j\in I\cup 0$. Moreover, $X=\cup_{i\in I\cup\{0\}} A_i^+(\vp)$.
\end{remark}

\begin{remark}\label{remhadash}
We may determine a {\em unique} vector price in the saturation (as well as the over-saturation case) by assigning the reasonable condition that the agents set their prices as high as they can. This is the maximal possible price for which $\mu(A^+_0)=0$:
$$ \vp_{\bar{\gamma}}:= \vp_0+\bar{\gamma}\vec{1}$$
 where
 $$  \vec{1}:=(1, \ldots 1)\in \R^N, \ \  \bar{\gamma}:= \inf\{ \gamma\in \R \ ; \ A_0^+(\vp_0+\gamma\vec{1}) =\emptyset \ \} \ .  $$

\end{remark}
\begin{example}\label{exAlambda}
Assume  $N:=|I|=2$ and the saturated case  $M_1+M_2=\mu(X)$.
Let
\be\label{com1} A_i^\lambda:= \left\{x\in X; \ \ \psi_i(x)\geq \psi_{j}(x)+\lambda \right\} \  \ee
where $i\not=j\in\{1,2\}$. Indeed, $A_0^+(\vp_0)=\emptyset$ in the saturated case, so $A_i^\lambda=A_i^+(\vp_0)$ where $\lambda=p_i-p_j$ in that case ($|I|=2$).   By Theorem \ref{old}, the optimal share of agent $i$ is determined by the condition $\mu(A_i^\lambda)=M_i$. Thus,
  $\bar{A}_i=A_i^{\bar{\lambda}(M_i)}$  is the optimal share of agent $i$, where
\be\label{com2} \bar{\lambda}(M):= \sup\{\lambda; \mu(A_i^\lambda) >M\} = \inf\{\lambda; \mu(A_i^\lambda) <M\} \ . \ee
In particular, if $\{x_i\}$ the set of maximizers of $\psi_i-\psi_j$ in $X$ then (recall (\ref{barP}))
$$ \lim_{M_i\rightarrow 0} M_i^{-1}\bar{P}_i(\vM,\vpsi)\equiv\lim_{M_i\rightarrow 0} \mu^{-1}(A_i^{\lambda(\vM_i)})\int_{A_i^{\lambda(\vM_i)}}\psi_id\mu\in Conv\{\psi_i(x_i)\} \ ,  $$
where $Conv\{\cdot\}$ stands for the convex hull, and the limit is uniquely determined  if $\{x_i\}$ is a singleton.

C.f Fig \ref{fig1}.

\end{example}
\begin{figure}[h]
  \centering
   \includegraphics[width=15cm]{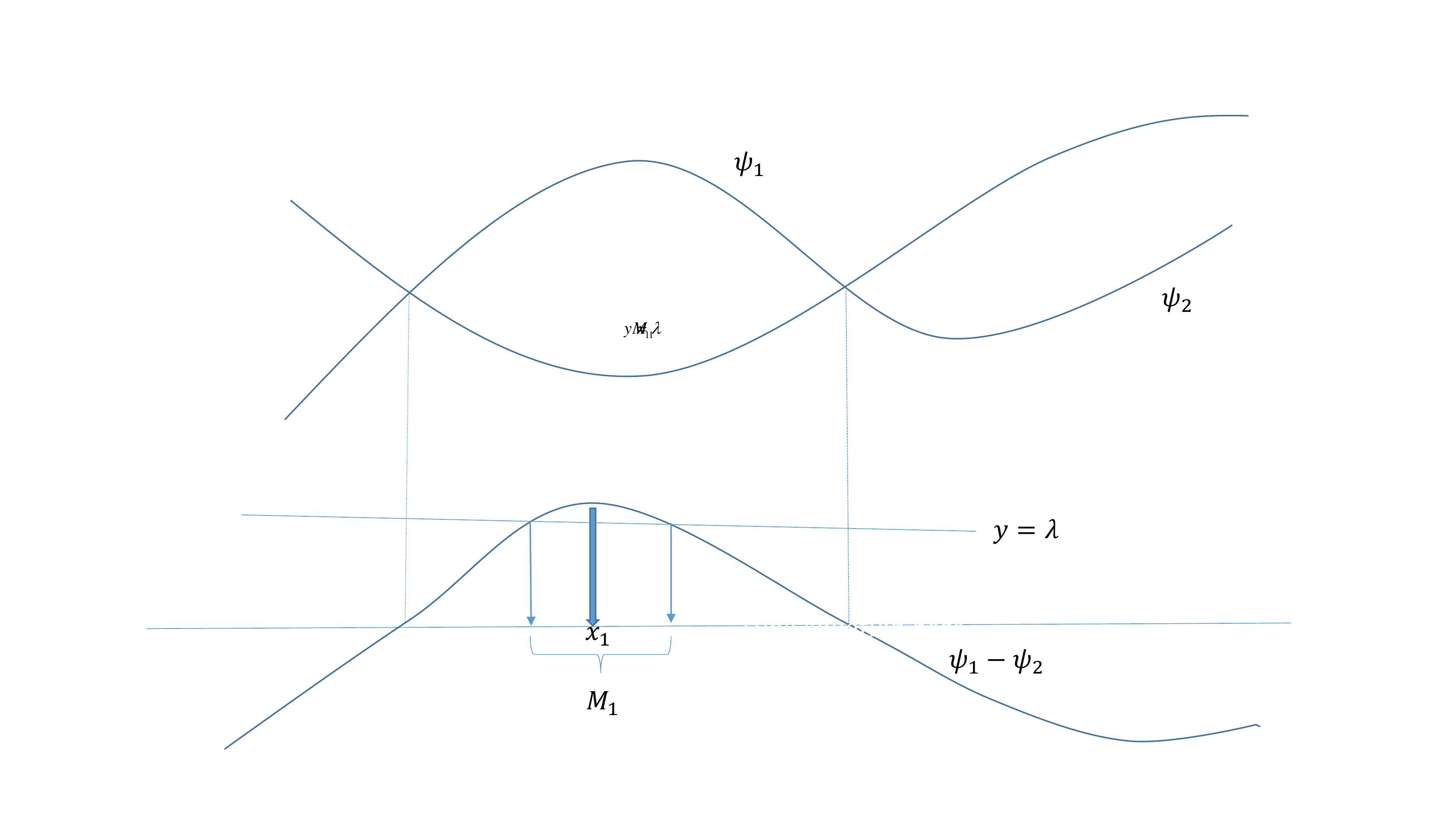}
       \caption{The optimal share $A_1$ of capacity $M_1$ is determined by the level line of $\psi_1-\psi_2\geq\lambda$, containing the maximal point $x_1$ of this function.}
       \label{fig1}
\end{figure}

\begin{example}\label{exlp}
 Suppose $\psi$  is a non-negative, continuous function on $X$ verifying $\mu(x; \psi(x)=r)=0$ for any $r\geq 0$. Let $\lambda_N>\lambda_{N-1}>\ldots \lambda_1>0$ be constants. We assume $\psi_i:=\lambda_i\psi$ verifies assumption \ref{funda}.

From (\ref{A+0}, \ref{A+i}) we obtain
$$ A_0^+(\vp)\equiv \{ x; \psi(x)<\min_{i\in I}\lambda_i^{-1}p_i\} \ ; \ \ A_i^+(\vp)\equiv \left\{ x; \min_{j>i}\frac{p_j-p_i}{\lambda_j-\lambda_i} > \psi(x) > \max_{j<i}\frac{p_j-p_i}{\lambda_j-\lambda_i}\right\}-A_0^+(\vp) \ . $$
In particular, the partition $A_i^+(\vp)$ are consists  of unions of level sets of the function $\psi$.
\par
At optimal partition we observe that the share of the "top agent" $N$ is just the level set
  $$ A_N:=A^{\bar{\lambda}_N}\equiv  \left\{x\in X; \ \ \psi(x)\geq \bar{\lambda}_N \right\} \  $$
where
$$ \bar{\lambda}_N:= \sup\{\lambda; \mu(A^\lambda) >M_N\} = \inf\{\lambda; \mu(A^\lambda) <M_N\} \  $$
(compare with (\ref{com1}, \ref{com2})). By induction we can proceed to find
 \be\label{lambdabar}A_{j}:=A^{\bar{\lambda}_{j+1},\bar{\lambda}_{{j}}}\equiv \left\{x\in X; \ \ \bar{\lambda}_{j+1}\geq \psi(x)\geq \bar{\lambda}_{j} \right\} \  ,  \ee
where $\bar{\lambda}_{j+1}$ is known  by the induction step and
$$ \bar{\lambda}_{j}:=  \inf\{\lambda\geq 0; \mu(A^{\bar{\lambda}_{j+1},\lambda}) <M_{j}\} \ . $$
An equivalent  representation of the optimal partition is obtained as follows:
 Let $A_s:= \{x\in X; \psi(x)\leq s\}$, $f(s):=\mu(A_s)$, $F(t):=\int_0^t f(s)ds$, $F^*(m):=\sup_{t\in\R}[mt-F(t)]$. Note that $F$ is a convex function and $F^*$ is its Legendre transform, which is convex as well  and is defined on the interval $[0,\mu(X)]$.  Moreover, by duality
 $$ F^*(\mu(A_s))\equiv\int_{A_s}\psi d\mu  .  $$
 \par
 Given $\vM\in \R^N_+$, let ${\cal M}_{N+1}:=\mu(X)$, ${\cal M}_{j}:= [{\cal M}_{j+1}-M_j]_+$ for $j=N, N-1, \ldots 1$. Then $\bar{\lambda}_i=F^*({\cal M}_{i})$, so (\ref{lambdabar}) is written as
 $$A_i:=\{x\in X; F^*({\cal M}_{i})<\psi(x)\leq  F^*({\cal M}_{i+1})\} \ \ \ \ ; \  \ N\geq i\geq 1 \ . $$
  Note that the optimal partition does not depend on $\vec{\lambda}:=(\lambda_1, \ldots, \lambda_N)$, as there is no relation between $\lambda_i$ and $\bar{\lambda}_i$.  However, the total profit $\Sigma_{\vpsi}(\vM)$ does, of course, depend on $\vec{\lambda}$:
  \be\label{unidi}\Sigma_{\vpsi}(\vM)=\sum_{i=1}^N \lambda_i\int_{A_i}\psi d\mu\equiv \sum_{i=1}^N \lambda_i(F^*({\cal M}_{i+1})-F^*\left({\cal M}_{i})\right) \ . \ee
  C.f Fig.  \ref{1p6}.
 \end{example}
\begin{figure}[h]
  \centering
    \includegraphics[width=15cm]{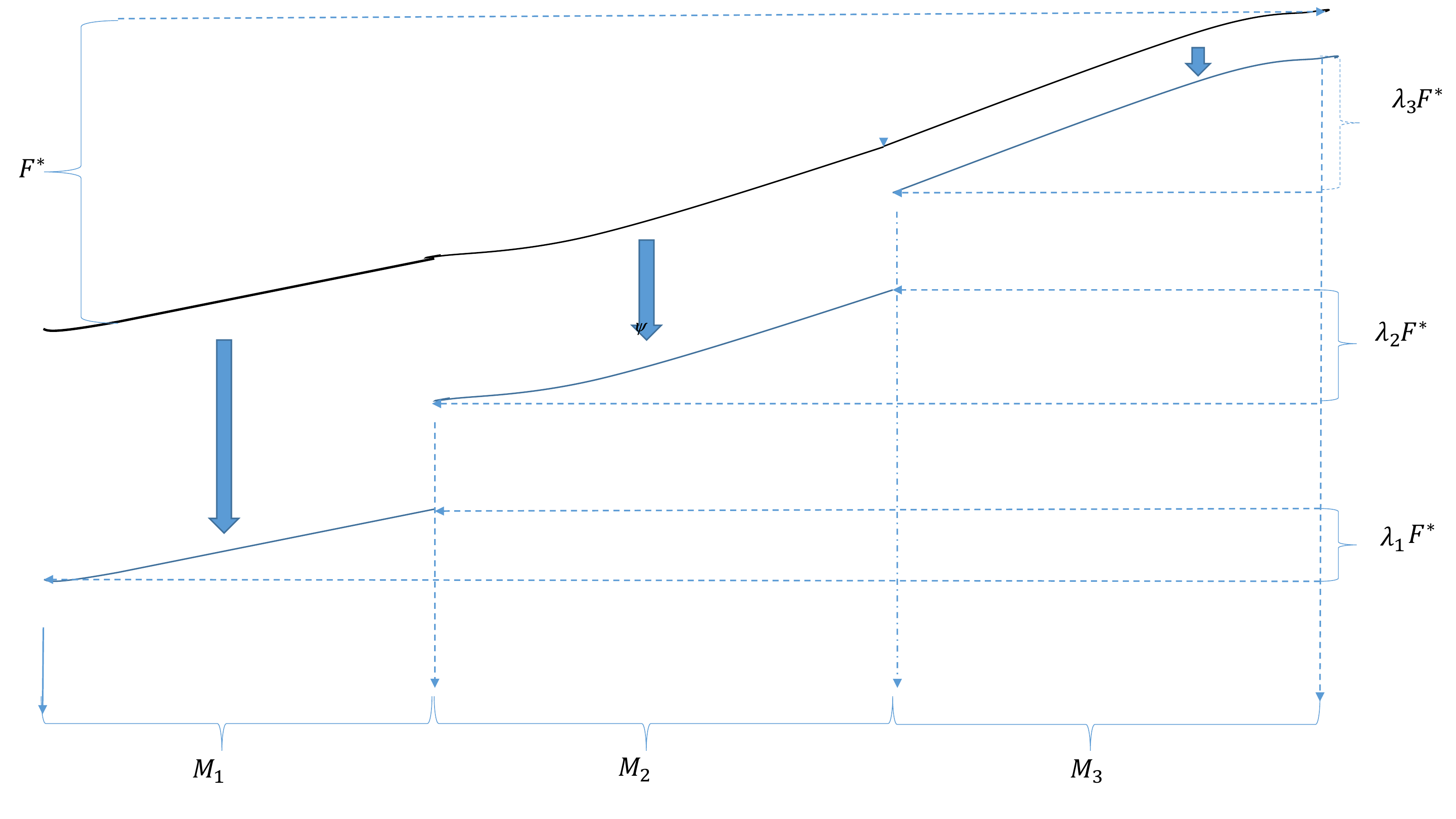}
       \caption{Representation of  $s\mapsto F^*(s))$ corresponding to $\psi$, and optimal partition for $\psi_i=\lambda_i\psi$,$i=1,2,3$ subjected to capacities $\vM=(M_1, M_2, M_3)$ at saturation.  Here $\lambda_3<1$.}
       \label{1p6}
\end{figure}

\section{Individual values}\label{indipro}
At the equilibrium price $\vp_0$, the profit of agent $i$ is just $p_{0,i}\mu(A_i^+(\vp_0))$. The profit gained by the set of her consumers $A_i^+(\vp_0)$ is
$$\int_{A^+_i(\vp_0)} \left(\psi_i(x)-p_{i,0}\right) d\mu \ . $$
The {\it sum} of the profits is
\be\label{surp}\bar{P}_i:=\int_{A_i^+(\vp_0)}\psi_id\mu \ , \ee
which, by Theorem \ref{old}, equals the consumer's part at the optimal partition $\bA_i(\vM,\vpsi)$ in the "Big Brother" paradigm given by (\ref{barP}). We denote $\bar{P}_i(\vM,\vpsi)$ as the {\em individual value} (i.v) of agent $i$, occasionally omitting its dependence on $\vM, \vpsi$.

\begin{assumption}\label{uni} Let $(\vpsi, \vM)$ verify Assumption \ref{funda}. Let $\tilde{\vpsi}$ given by $\tilde{\psi_i}=\psi_i$ for $i\not= 1$.
In other words,  the wisdom $\psi_1$ of agent 1 is replaced  by $\tilde{\psi}_1$ {\em while
keeping $\psi_i$, $2\leq i\leq N$ and $\vM$ unchanged}. The
  new system $(\tilde{\vpsi}, \vM)$ verifies Assumption \ref{funda} as well. We denote    $\tilde{P}_1$   the i.v of agent 1 after this change takes place.
\end{assumption}
\paragraph{Do wiser agents always get higher values?}

Assuming a system of two agents in saturation and suppose that, after some education and training,  agent $1$ improves her wisdom, so, in the notation of Assumption \ref{uni},  $\tilde{\psi}_1\geq \psi_1$ on $X$.
 We expect that the i.v of  the first agent $\bar{P}_1$ will increase under this change, so $\tilde{P}_1\geq \bar{P}_1$.  Is it so, indeed?
 \par
 Well, not necessarily! Suppose $M_1<<\mu(X)$ and let $x_1$ be a unique maximizer of $\psi_1-\psi_2$. According to  Example \ref{exAlambda}, $\bar{P}_1 \approx M_1\psi_1(x_1)$. Let  $\tilde{x}_1$ be a unique  maximizer of $\tilde{\psi}_1-\psi_2$. Now,
 $\tilde{P}_1\approx M_1\tilde{\psi}_1(\tilde{x}_1)$.  But it may happen  that  $\tilde{\psi}_1(\tilde{x}_1)< \psi_1(x_1)$, even though $\tilde{\psi}_1>\psi_1$ everywhere!
 \par
 Definitely, there are cases for which  an increase in the wisdom of a given agent will increase her i.v, independently of her own capacity, as well as  the wisdoms and capacity  of the other agents. In particular,  we can think about two cases where the above argument fails:
 \begin{description}
 \item{Case 1:} \ If $\tilde{\psi}_1=\psi_1+\lambda$, where $\lambda>0$ is a constant.
 \item{Case 2:} \ if  $\tilde{\psi}_1=\beta \psi_1$, where $\beta\geq 2$ is a constant.
 \end{description}
 In the first case the "gaps" $\psi_1-\psi_2$ and $\tilde{\psi}_1-\psi_2$ preserve their order, so if $x_1$ is a maximizer of the first, it is also a maximizer of the second. In particular  the optimal partition $\bar{A}_1, \bar{A}_2$ is unchanged, and we can even predict that $\tilde{P}_1=\bar{P}_1+\lambda M_1>\bar{P}_1$ (c.f Theorem \ref{new0} below).
 \par
 In the second case the order of gaps may change. It is certainly possible that $\tilde{\psi}_1(\tilde{x}_1)-\psi_2(\tilde{x}_1)> \tilde{\psi}_1(x_1)-\psi_2(x_1)$ (where $x_1$, $\tilde{x}_1$ as above), but, if this is the case, an elementary calculation yields $\tilde{\psi}_1(\tilde{x}_1)>
 \psi_1(x_1)$, so the above argument fails. Indeed, if we assume both $\beta\psi_1(\tilde{x}_1)-\psi_2(\tilde{x}_1)> \beta\psi_1(x_1)-\psi_2(x_1)$ and $\beta\psi_1(\tilde{x}_1)<
 \psi_1(x_1)$, then $\psi_1(x_1)-\psi_2(x_1)< -\psi_2(\tilde{x}_1)<\psi_1(\tilde{x}_1)-\psi_2(\tilde{x}_1)$ (since $\psi_1\geq 0$ and $\beta\geq 2$), so  $x_1$ cannot be the maximizer of $\psi_1-\psi_2$ as assumed.
 \par
 In fact, more can be said:
 \begin{theorem}\label{new0}
 Under Assumption  \ref{uni},
 \begin{description}
 \item{i)} If $\tilde{\psi}_1=\beta\psi_1$ for a constant $\beta>0$ then $\tilde{P}_1\geq\beta\bar{P}_1$ if $\beta>1$, $\tilde{P}_1\leq\beta\bar{P}_1$ if $\beta<1$.
 \item{ii)} In the saturation and under saturation  cases, if  $\tilde{\psi}_1=\psi_1+\lambda$ for a constant $\lambda>0$ then $\tilde{P}_1=\bar{P}_1+\lambda M_1$.
 \end{description}
 \end{theorem}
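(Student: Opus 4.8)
\emph{Proof proposal.} I will treat part (ii) first, because here the optimal partition is in fact \emph{unchanged}. The selection sets $A_i^+(\vp)$ in (\ref{A+0}), (\ref{A+i}) depend only on the differences $\psi_i(x)-p_i$. So, starting from an equilibrium price $\vp_0$ for $(\vpsi,\vM)$, I would define $\tilde\vp_0$ by $\tilde p_{0,1}:=p_{0,1}+\lambda$ and $\tilde p_{0,j}:=p_{0,j}$ for $j\neq 1$. Then $\tilde\psi_1(x)-\tilde p_{0,1}=\psi_1(x)-p_{0,1}$ while $\tilde\psi_j-\tilde p_{0,j}=\psi_j-p_{0,j}$ for the remaining agents, so every selection set is literally preserved, $A_i^+(\tilde\vp_0)=A_i^+(\vp_0)$, and in particular the capacity constraints still hold. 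Hence $\tilde\vp_0$ is an equilibrium price for $(\tilde\vpsi,\vM)$, and by the uniqueness in Theorem \ref{old} it realizes the unique optimal partition of the new system, i.e. $\tilde A_1=\bar A_1$. It then follows that $\tilde P_1=\int_{\bar A_1}(\psi_1+\lambda)\,d\mu=\bar P_1+\lambda\mu(\bar A_1)$, and since in the S and US cases Remark \ref{sineq} gives $\mu(\bar A_1)=M_1$, I obtain $\tilde P_1=\bar P_1+\lambda M_1$.

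For part (i) the partition genuinely changes under $\tilde\psi_1=\beta\psi_1$, so the plan is to play the two maximization problems against each other. Let $\vec{\bar A}$ and $\vec{\tilde A}$ be the optimal partitions for $(\vpsi,\vM)$ and $(\tilde\vpsi,\vM)$. The crucial observation is that both lie in the \emph{same} admissible set ${\cal P}_{\vM}$, so each is a legitimate competitor in the other's problem. Writing $a:=\int_{\bar A_1}\psi_1\,d\mu=\bar P_1$, $\tilde a:=\int_{\tilde A_1}\psi_1\,d\mu=\tilde P_1/\beta$, and $R:=\sum_{i\neq 1}\int_{\bar A_i}\psi_i\,d\mu$, $\tilde R:=\sum_{i\neq 1}\int_{\tilde A_i}\psi_i\,d\mu$, optimality of $\vec{\tilde A}$ for the scaled objective yields $\beta\tilde a+\tilde R\ge\beta a+R$, while optimality of $\vec{\bar A}$ for the original objective yields $a+R\ge\tilde a+\tilde R$. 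These rewrite as $R-\tilde R\le\beta(\tilde a-a)$ and $\tilde a-a\le R-\tilde R$ respectively, which chain together to give $\tilde a-a\le\beta(\tilde a-a)$, that is $(\tilde a-a)(1-\beta)\le 0$. Substituting $\tilde a=\tilde P_1/\beta$ and $a=\bar P_1$ produces $(\tilde P_1/\beta-\bar P_1)(1-\beta)\le 0$, which is exactly the claim: $\tilde P_1\ge\beta\bar P_1$ when $\beta>1$ and $\tilde P_1\le\beta\bar P_1$ when $\beta<1$.

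The decisive step --- and the only real obstacle --- is the cancellation in (i) of the other agents' contributions $R,\tilde R$, whose assigned sets are themselves perturbed by the reallocation. It is precisely this cancellation that makes the estimate hold \emph{independently} of the wisdoms and shares of agents $2,\ldots,N$, matching the paper's emphasis. Two routine checks remain: that the two variational inequalities are valid in all three saturation regimes (they are, since ${\cal P}_{\vM}$ is the common feasible set over which the objective is maximized in each case), and that Assumption \ref{uni} indeed secures existence and uniqueness of $\vec{\tilde A}$ so that $\tilde P_1$ is well defined.
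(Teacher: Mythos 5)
Your proposal is correct, and part (i) takes a genuinely different route from the paper's. The paper proves (i) by duality and convexity: it forms $\Xi(\vt,\vp):=\Xi_{\vt\otimes\vpsi}(\vp)$, invokes Lemma \ref{lemc2} to conclude that $\Sigma(\vt,\vM):=\min_{\vp}\Xi(\vt,\vp)+\vp\cdot\vM$ is convex in $\vt$, identifies $t_1^{-1}P_1(\vt)$ with $\partial_{t_1}\Sigma$, and then uses monotonicity of the derivative of a convex function along the segment from $(1,\ldots,1)$ to $(\beta,1,\ldots,1)$. Your exchange (revealed-preference) argument --- testing each optimal partition as a competitor in the other's problem and chaining the two inequalities so that the other agents' contributions $R,\tilde R$ cancel --- is more elementary: it needs no dual functional, no differentiability, only the existence and well-definedness of the two maximizers (which Theorem \ref{old} supplies under Assumption \ref{uni}) and the fact that ${\cal P}_{\vM}$ is the common feasible set, and it handles US, S and OS uniformly, exactly as you note. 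What the paper's heavier machinery buys is reuse: the convex-homotopy framework ($t\mapsto\Sigma$ convex, $\dot\Sigma=\int_{\bar A_1(t)}\dot\psi\,d\mu$) is precisely the engine for Theorems \ref{new} and \ref{new1}, where the hypothesis is only an inequality $\tilde\psi_1\geq\beta\psi_1$ and a sharp $\lambda$-dependent lower bound (\ref{barP1}) is needed. It is worth observing that your argument does extend to give (\ref{ini}) of Theorem \ref{new} --- optimality of $\vec{\tilde A}$ gives $\tilde P_1\geq\beta\bar P_1+(R-\tilde R)$, optimality of $\vec{\bar A}$ gives $R-\tilde R\geq\tilde a-\bar P_1\geq-\bar P_1$, hence $\tilde P_1\geq(\beta-1)\bar P_1$ --- but it does not obviously reproduce the sharp estimate of Theorem \ref{new1}, which is where the convexity approach earns its keep. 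For part (ii) you and the paper share the same idea (the optimal partition is unchanged), but your justification via the shifted equilibrium price $\tilde p_{0,1}=p_{0,1}+\lambda$ together with the uniqueness clause of Theorem \ref{old}, and your appeal to Remark \ref{sineq} for $\mu(\bar A_1)=M_1$ in the S/US cases, is actually more complete than the paper's one-line assertion.
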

 In Theorem \ref{new} we expand on case (i) of Theorem \ref{new0} and obtain the following, somewhat surprising result:

 \begin{theorem} \label{new}\
  Under Definition \ref{uni},
 \begin{description}
 \item{i)}
 Suppose  $\tilde{\psi}_1\geq \beta\psi_1$ where $\beta >1$ is a  constant.  Then
 \be\label{ini}\tilde{P}_1\geq (\beta-1)\bar{P}_1 \ . \ee
 \item{ii)} For any $\beta>2$, $s>\beta-1$  there exists such a system ($\vpsi,\vM$) and $\tilde{\psi}_1\geq \beta\psi_1$  verifying Assumption \ref{uni} such that
 $$\tilde{P}_1<s\bar{P}_1 \ . $$
 In particular, the inequality  (\ref{ini}) is sharp  in the case $\beta>2$.
 \end{description}
     \end{theorem}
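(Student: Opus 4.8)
My plan is to prove the lower bound (i) by a double use of the optimality of super-level sets, and the sharpness (ii) by a small-capacity construction resting on Example \ref{exAlambda}. For (i) I would first treat the two-agent saturated case, which already carries the whole idea. Here the ``field'' that agent $1$ faces is literally $\psi_2$, and by Example \ref{exAlambda} and Remark \ref{sineq} the optimal sets are the super-level sets $\bar{A}_1=\{\psi_1-\psi_2\ge\lambda\}$ and $\tilde{A}_1=\{\tilde{\psi}_1-\psi_2\ge\tilde\lambda\}$, each of measure $M_1$. The key is to combine the optimality of \emph{each} set for its \emph{own} objective. Using that $\tilde{A}_1$ maximizes $\int_A(\tilde{\psi}_1-\psi_2)\,d\mu$ over measure-$M_1$ sets, together with $\tilde{\psi}_1\ge\beta\psi_1$,
\[
\tilde{P}_1=\int_{\tilde{A}_1}\tilde{\psi}_1\,d\mu \;\ge\; \int_{\bar{A}_1}(\tilde{\psi}_1-\psi_2)\,d\mu+\int_{\tilde{A}_1}\psi_2\,d\mu \;\ge\; \beta\bar{P}_1-\left(\int_{\bar{A}_1}\psi_2\,d\mu-\int_{\tilde{A}_1}\psi_2\,d\mu\right).
\]
The correction in parentheses is then controlled by the optimality of $\bar{A}_1$ for $\int_A(\psi_1-\psi_2)\,d\mu$: since $\int_{\bar{A}_1}(\psi_1-\psi_2)\,d\mu\ge\int_{\tilde{A}_1}(\psi_1-\psi_2)\,d\mu$, one gets $\int_{\bar{A}_1}\psi_2\,d\mu-\int_{\tilde{A}_1}\psi_2\,d\mu\le\bar{P}_1-\int_{\tilde{A}_1}\psi_1\,d\mu\le\bar{P}_1$, the last step using $\psi_1\ge0$ (Assumption \ref{funda}). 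Substituting yields $\tilde{P}_1\ge(\beta-1)\bar{P}_1$, which is (\ref{ini}).

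The step I expect to be the main obstacle is upgrading this to $N$ agents and to the US/OS cases. With more than two agents the field agent $1$ faces is $\max_{j\ge2}[\psi_j-p_{0,j}]_+$, assembled from the \emph{other} agents' equilibrium prices, and these prices move when $\psi_1$ is replaced by $\tilde{\psi}_1$; thus the two super-level sets are taken with respect to two \emph{different} fields and the clean cancellation above is lost. I would handle this either by reducing agent $1$'s individual value to an effective two-agent problem against the super-agent of wisdom $\max_{j\ge2}\psi_j$ and combined capacity $\sum_{j\ge2}M_j$ (the delicate point being to verify that this reduction leaves $\bar{A}_1$ and $\bar{P}_1$ genuinely unchanged), or by invoking the duality machinery of Section \ref{duality} and the monotonicity behind Theorem \ref{new0} to argue that the field shift can only help agent $1$. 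In the US case I would additionally carry the outside option $A_0^+$, which forces $\psi_1\ge p_{0,1}$ on $\bar{A}_1$ and only strengthens the needed sign; in OS I would run the same argument with the effective capacity $\hat{M}_1$ of Theorem \ref{old}.

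For (ii) I would build the extremal example in the small-capacity regime, where Example \ref{exAlambda} supplies the sharp asymptotics $M_1^{-1}\bar{P}_1\to\psi_1(x_1)$ and $M_1^{-1}\tilde{P}_1\to\tilde{\psi}_1(\tilde{x}_1)$, with $x_1$ the maximizer of $\psi_1-\psi_2$ and $\tilde{x}_1$ that of $\tilde{\psi}_1-\psi_2$. The plan is to force $\tilde{x}_1\neq x_1$ by exploiting the slack in $\tilde{\psi}_1\ge\beta\psi_1$: place a consumer region around $x_1$ on which $\psi_1\approx\psi_2\approx P$ (so the original gap is near its maximum while $\psi_1(x_1)=P$) and set $\tilde{\psi}_1=\beta\psi_1$ there, and place a second region around $\tilde{x}_1$ on which $\psi_1,\psi_2\approx0$ but $\tilde{\psi}_1\approx(\beta-1)P$ is boosted well above $\beta\psi_1$. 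Then the new gap is largest at $\tilde{x}_1$, agent $1$ migrates there and collects $\tilde{P}_1\approx M_1(\beta-1)P$, giving a ratio $\tilde{P}_1/\bar{P}_1\to\beta-1$; tuning the slack so the ratio sits strictly between $\beta-1$ and $s$ produces $\tilde{P}_1<s\bar{P}_1$. The delicate part is to realize this configuration with continuous, a.e.-positive wisdoms obeying Assumption \ref{funda} while keeping the two maximizers strictly separated and correctly ordered, and it is precisely in arranging these strict orderings with $\tilde{\psi}_1(\tilde{x}_1)$ pushed down to $(\beta-1)P$ that I expect the hypothesis $\beta>2$ to be used.
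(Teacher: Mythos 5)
Your part (ii) follows essentially the paper's own route: both arguments pin the shares down with the small-capacity asymptotics of Example \ref{exAlambda} and then graft a bump onto $\beta\psi_1$ at a point where $\psi_1$ is nearly zero, so that the maximizer of the new gap migrates there and the new value drops to roughly $(\beta-1)\bar{P}_1$. That part is sound (in fact your construction never really uses $\beta>2$, which is harmless since the claim is only made for $\beta>2$).

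The genuine gap is in part (i), exactly at the step you flagged as the main obstacle. Your two-agent saturated argument is correct, and it is a genuinely different, more elementary route than the paper's: two applications of the level-set (bathtub) optimality, plus $\psi_1\geq 0$. It works because at $N=2$, saturation, both $\bar{A}_1$ and $\tilde{A}_1$ are super-level sets of gaps against the \emph{same} function $\psi_2$. But neither of your proposed repairs delivers the general case. The reduction to a super-agent with wisdom $\max_{j\geq2}\psi_j$ and capacity $\sum_{j\geq2}M_j$ is not merely delicate; it is false: merging the opponents changes $\bar{A}_1$ and $\bar{P}_1$. Take $X=[0,1]$ with Lebesgue measure, $\psi_1=1+\eps x$, $\psi_2=3x$, $\psi_3=\tfrac{3}{2}(1-x)$, $M_1=0.8$, $M_2=M_3=0.1$ (saturated; Assumption \ref{funda} holds). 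The three-agent equilibrium (prices $p_1=0$, $p_2=1.7-0.9\eps$, $p_3=0.35-0.1\eps$) gives $\bar{A}_1=[0.1,0.9]$, so $\bar{P}_1=0.8+0.4\eps$; against the merged super-agent, agent $1$'s optimal share is the complement of the measure-$0.2$ super-level set of $\max(\psi_2,\psi_3)-\psi_1$, namely $[0,0.8]$, with value $0.8+0.32\eps$. The super-agent shifts the whole joint capacity to where $\psi_2$ is large, which the individually constrained pair cannot do -- the same phenomenon discussed around (\ref{motive}). Your alternative suggestion, that ``the field shift can only help agent 1,'' is not an argument; note also that by Remark \ref{rem52} even the two-agent US case is a three-agent saturated problem, so it too falls outside your clean cancellation. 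The paper's proof bypasses any comparison of the two partitions: writing $\sigma:=\tilde{\psi}_1-\beta\psi_1\geq 0$, it interpolates $\psi(x,t):=(1+(\beta-1)t)\psi_1(x)+\sigma(x)t^{1+\eps}$, observes via Lemma \ref{lemc2} that $t\mapsto\Sigma(\vM,t)$ is convex for every $N$ and in every saturation regime, computes $\dot{\Sigma}(\vM,0)=(\beta-1)\bar{P}_1$ and $\dot{\Sigma}(\vM,1)\leq\tilde{P}_1+\delta\|\sigma\|_\infty$, and concludes from monotonicity of the derivative of a convex function, letting $\delta\to0$. To complete your proof you would need this duality/homotopy argument (or a genuinely new idea) for $N\geq3$ and for the US/OS cases.
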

     \begin{cor}
     The i.v of an agent cannot decrease if its  wisdom $\psi_i$ is {\em at least doubled} ($\tilde{\psi}_i\geq 2\psi_i$).
     \end{cor}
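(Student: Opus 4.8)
The plan is to read off the corollary as the boundary case $\beta=2$ of Theorem \ref{new}(i). Although Theorem \ref{new} and Assumption \ref{uni} are phrased for agent $1$, the index is only a labeling convention: relabeling the agents lets us assume that the agent whose wisdom is modified is agent $1$, so the statement applies verbatim to any agent $i$. Under this relabeling the hypothesis $\tilde{\psi}_i\geq 2\psi_i$ becomes $\tilde{\psi}_1\geq 2\psi_1$, which is precisely the hypothesis $\tilde{\psi}_1\geq\beta\psi_1$ of Theorem \ref{new}(i) with the constant $\beta=2$.

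Since $\beta=2>1$, Theorem \ref{new}(i) is applicable and gives
\be\label{corproof}
\tilde{P}_1\geq(\beta-1)\bar{P}_1=(2-1)\bar{P}_1=\bar{P}_1 \ .
\ee
Translating back, $\tilde{P}_i\geq\bar{P}_i$, i.e. the individual value of the agent does not decrease, which is exactly the assertion of the corollary.

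There is no genuine obstacle here, as all the content resides in Theorem \ref{new}(i); the only two points worth checking are that the hypothesis $\tilde{\psi}_i\geq 2\psi_i$ supplies the admissible constant $\beta=2>1$ required by that theorem (which excludes $\beta\leq 1$), and that $\beta=2$ is exactly the value at which the multiplicative factor $(\beta-1)$ in (\ref{ini}) equals $1$, so that the quantitative lower bound degenerates into the plain non-degradation inequality $\tilde{P}_1\geq\bar{P}_1$. For $\beta>2$ the same theorem even yields the strict gain $\tilde{P}_1\geq(\beta-1)\bar{P}_1>\bar{P}_1$, whereas for $1<\beta<2$ the guaranteed factor $(\beta-1)$ drops below $1$; thus, within the scope of the bound (\ref{ini}), $\beta=2$ is the smallest multiplicative improvement of the wisdom for which non-decrease of the individual value is assured. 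This is in line with the informal Case~2 discussion preceding Theorem \ref{new0}, where the threshold $\beta\geq 2$ already emerged as the regime in which the paradoxical decrease of the i.v. cannot occur.
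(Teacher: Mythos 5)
Your proposal is correct and matches the paper's (implicit) argument exactly: the corollary is stated as an immediate consequence of Theorem \ref{new}(i), obtained by taking $\beta=2$ so that the factor $\beta-1$ in (\ref{ini}) equals $1$, and the relabeling of the modified agent as agent $1$ is indeed just a notational convention. Your surrounding remarks on the threshold role of $\beta=2$ are accurate and consistent with the paper's discussion preceding Theorem \ref{new0}.
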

    In Theorem \ref{new1} we obtain sharp conditions for {\it decrease} of i.v, given an increase of the corresponding wisdom:
\begin{theorem}\label{new1}   Under Assumption \ref{uni}, in   the under saturation/saturation  cases:  \begin{description}
\item{i)}If $1<\beta <2$, $\lambda\geq 0$ and
 \be\label{c2}\beta\psi_1(x)\leq \tilde{\psi}_1(x)\leq \beta\psi_1(x)+\lambda \ ,\ee
 then
 \be\label{barP1} \tilde{P}_1 \geq \bar{P}_1 -\frac{M_1\lambda(2-\beta)}{\beta-1}\ . \ee
  \item{ii)} For any $1<\beta<2, \lambda>0, s<\lambda(2-\beta)/(\beta-1)$  there exists a system  $(\vpsi, \vM)$ and \\
  $\beta\psi_1\leq \tilde{\psi}_1\leq \beta\psi_1+\lambda$  verifying Assumption \ref{uni} such that
 $$\tilde{P}_1<\bar{P}_1-M_1 s \   \ . $$
  In particular, the inequality  (\ref{barP1}) is sharp.
 \end{description}
 \end{theorem}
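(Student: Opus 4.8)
The plan is to run both parts off a single \emph{cross-optimality} inequality. Write $\bar{A}=(\bar{A}_1,\dots,\bar{A}_N)$ for the optimal partition of the original system $(\vpsi,\vM)$ and $\tilde{A}=(\tilde{A}_1,\tilde{A}_2,\dots,\tilde{A}_N)$ for that of $(\tilde{\vpsi},\vM)$, both of which exist and are unique by Theorem \ref{old}. Since the capacities and the wisdoms $\psi_i$ ($i\ge 2$) are common to the two systems, $\bar{A}$ is an admissible competitor for $(\tilde{\vpsi},\vM)$ and $\tilde{A}$ is an admissible competitor for $(\vpsi,\vM)$. Comparing each optimum against the other's partition gives $\int_{\tilde{A}_1}\tilde{\psi}_1 d\mu+\sum_{i\ge2}\int_{\tilde{A}_i}\psi_i d\mu\ \ge\ \int_{\bar{A}_1}\tilde{\psi}_1 d\mu+\sum_{i\ge2}\int_{\bar{A}_i}\psi_i d\mu$, together with the mirror inequality obtained by putting $\psi_1$ in place of $\tilde{\psi}_1$ and exchanging the roles of $\bar{A}$ and $\tilde{A}$. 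Adding the two, the common sums over $i\ge2$ cancel and we obtain the fundamental relation
\[\tilde{P}_1+\bar{P}_1\ \ge\ \int_{\bar{A}_1}\tilde{\psi}_1 d\mu+\int_{\tilde{A}_1}\psi_1 d\mu.\]
Moreover, in the S and US cases Remark \ref{sineq} gives $\mu(\tilde{A}_1)=M_1$, a fact I will use below.

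For part (i) I would extract two complementary lower bounds for $\tilde{P}_1$ from this relation. On $\bar{A}_1$ the hypothesis $\tilde{\psi}_1\ge\beta\psi_1$ yields $\int_{\bar{A}_1}\tilde{\psi}_1 d\mu\ge\beta\bar{P}_1$. On $\tilde{A}_1$, using only $\psi_1\ge0$ gives the first bound $\tilde{P}_1\ge(\beta-1)\bar{P}_1=:b_1$ (this is exactly Theorem \ref{new}(i)); using instead the upper constraint $\tilde{\psi}_1\le\beta\psi_1+\lambda$, i.e. $\psi_1\ge(\tilde{\psi}_1-\lambda)/\beta$, together with $\mu(\tilde{A}_1)=M_1$, gives $\int_{\tilde{A}_1}\psi_1 d\mu\ge(\tilde{P}_1-\lambda M_1)/\beta$, and after solving for $\tilde{P}_1$ the second bound $\tilde{P}_1\ge\beta\bar{P}_1-\lambda M_1/(\beta-1)=:b_2$. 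The target (\ref{barP1}) is then the \emph{lower envelope} of these two: a short computation shows $b_1\ge\bar{P}_1-M_1\lambda(2-\beta)/(\beta-1)$ precisely when $\bar{P}_1\le\lambda M_1/(\beta-1)$, and $b_2\ge\bar{P}_1-M_1\lambda(2-\beta)/(\beta-1)$ precisely when $\bar{P}_1\ge\lambda M_1/(\beta-1)$. Since $\tilde{P}_1\ge\max(b_1,b_2)$, one of the two always dominates the claimed bound, proving (\ref{barP1}). The role of $1<\beta<2$ is exactly that $2-\beta>0$, so the envelope lies below $\bar{P}_1$; for $\beta\ge2$ one is back in the regime of Theorem \ref{new}.

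For the sharpness in part (ii), I would reverse-engineer the equality case, where all the inequalities above are tight: $\bar{P}_1=\lambda M_1/(\beta-1)$, $\tilde{\psi}_1=\beta\psi_1$ on $\bar{A}_1$, $\tilde{\psi}_1=\beta\psi_1+\lambda$ on $\tilde{A}_1$, and $\psi_1\approx0$ on $\tilde{A}_1$. Concretely, take two agents in saturation on a space partitioned (up to the remainder assigned to agent $2$) into two sets $R,R'$ of equal measure $M_1$; set $\psi_1\equiv\lambda/(\beta-1)$ on $R$ and $\psi_1\equiv\varepsilon$ on $R'$, and $\tilde{\psi}_1\equiv\beta\psi_1$ on $R$, $\tilde{\psi}_1\equiv\beta\psi_1+\lambda$ on $R'$. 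Choosing $\psi_2$ so that its drop from $R$ to $R'$ lies in the nonempty window $(\lambda/(\beta-1)-\beta\varepsilon,\ \lambda/(\beta-1)-\varepsilon)$ forces the original optimal share to be $\bar{A}_1=R$ (larger gap $\psi_1-\psi_2$) but the new optimal share to be $\tilde{A}_1=R'$ (larger gap $\tilde{\psi}_1-\psi_2$), by the level-set characterization of Example \ref{exAlambda}. Then $\bar{P}_1=M_1\lambda/(\beta-1)$ and $\tilde{P}_1=M_1(\lambda+\beta\varepsilon)$, so the drop $\bar{P}_1-\tilde{P}_1=M_1\big(\lambda(2-\beta)/(\beta-1)-\beta\varepsilon\big)$ exceeds $M_1 s$ once $\varepsilon$ is small enough, for any $s<\lambda(2-\beta)/(\beta-1)$.

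The cleanest part is the algebra of part (i): the only genuine idea there is that a single pointwise estimate is lossy and the sharp constant is the maximum of the two estimates $b_1,b_2$ (equivalently, a case split on $\bar{P}_1$ versus $\lambda M_1/(\beta-1)$). The real work sits in part (ii). First, the piecewise-constant data violate Assumption \ref{funda} (continuity and the conditions $\mu(\psi_i-\psi_j=r)=0$ and $\mu(\psi_i=r)=0$), so I must replace the two plateaus by continuous profiles carrying the prescribed level-set measures and perturb slightly to remove atoms of the gap distribution, checking that $\bar{A}_1,\tilde{A}_1$ remain the intended sets. Second, I must ensure the optimal partition actually selects a single one of $R,R'$: since each has measure exactly $M_1$ and agent $1$'s capacity is $M_1$, the gap ordering must place one region strictly at the top for each system, which is also why the extremum is only approached ($s<\lambda(2-\beta)/(\beta-1)$, strict) and the feasibility window for $\psi_2$ closes as $\varepsilon\to0$.
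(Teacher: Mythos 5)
Your proposal is correct, but for part (i) it takes a genuinely different route from the paper. The paper proves (i) by embedding the change $\psi_1\to\tilde{\psi}_1$ in a one-parameter family $\psi(x,t)=(1+t)(\psi_1(x)+\gamma)+\sigma(x)\phi(t)$ with $\phi(t)=t^{1+\eps}$, invoking Lemma \ref{lemc2} to get convexity of $t\mapsto\Sigma(\vM,t)$, and comparing $\dot{\Sigma}$ at $t=0$ and $t=\beta-1$; the constant $\lambda(2-\beta)/(\beta-1)$ emerges from balancing the auxiliary constant $\gamma$ against the constraint $\|\sigma\|_\infty<\gamma t/(1-t)$. You instead add the two optimality inequalities for the two systems (legitimate, since ${\cal P}_{\vM}$ does not depend on the wisdoms, so each optimal partition is admissible for the other problem) to get $\tilde{P}_1+\bar{P}_1\geq\int_{\bar{A}_1}\tilde{\psi}_1d\mu+\int_{\tilde{A}_1}\psi_1d\mu$, then extract $b_1=(\beta-1)\bar{P}_1$ from $\psi_1\geq0$ and $b_2=\beta\bar{P}_1-\lambda M_1/(\beta-1)$ from $\psi_1\geq(\tilde{\psi}_1-\lambda)/\beta$ together with $\mu(\tilde{A}_1)=M_1$ (Remark \ref{sineq}); your crossover criterion at $\bar{P}_1=\lambda M_1/(\beta-1)$ is algebraically correct, so $\max(b_1,b_2)$ always dominates the claimed bound. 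What your route buys: it is elementary (no duality, no differentiability or convexity machinery), it makes the origin of the sharp constant transparent as the crossover of two regimes, and it yields Theorem \ref{new}(i) as a free byproduct; what the paper's route buys: a uniform argument with no case split, using exactly the same machinery (Lemmas \ref{lemc1}, \ref{lemc2}) that drives Theorems \ref{new0} and \ref{new}. For part (ii), your construction is the same mechanism as the paper's — the new optimum relocates agent 1's share to a region where $\psi_1\approx0$ but the full bonus $\lambda$ is collected, while the old optimum sat where $\psi_1\approx\lambda/(\beta-1)$ — realized differently: the paper uses a bump $\lambda\theta_0$ at a point $x_2$ with $\psi_1(x_2)=0$ and the concentration limit $M_1\to0$ via Example \ref{exAlambda}, whereas your two plateaus of measure exactly $M_1$ work at fixed $M_1\leq\mu(X)/2$. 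Your deferred smoothing step is genuinely routine because all your conditions (the window for the $\psi_2$-drop, the strict gap orderings, the final strict inequality) are open; just note that you also need $\psi_2$ large enough on the remainder $X-R-R'$ so that neither optimal share leaks into it — a detail at the same level of informality the paper itself permits in its own part (ii).
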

 The proofs of Theorems \ref{new0}-\ref{new1} are given in Section \ref{duality}.
 \section{Coalition/Partition games}\label{ColPar}
    \subsection{Cartels and coalitions}
Suppose that
some  agents $J\subset I$ decide to establish  a cartel. In the free market interpretation it means that they coordinate the price of their services,  i.e.  $p_i:=p_J$  for some $p_J\in\R$ for any $i\in J$. In the big brother interpretation it means that $J$ is replaced by a single "agent" whose capacity is
\be\label{MJ}M_J:=\sum_{i\in J} M_i\ee
 and its wisdom is the maximal wisdom  of its agents, namely
\be\label{psiJ}\psi_J(x):= \max_{i\in J}\psi_i(x) \ \ \ \forall x\in X \  \ . \  \ee
More generally, let ${\cal J}:=\{J_1, \ldots J_k\}$ where $J_i\in 2^I$ a disjoint covering of $I$, that is
$$I=\cup_{i=1}^k J_i \ , \ \ \ \ J_i\cap J_l=\emptyset \ \ \ \forall i\not=l\in \{1, \ldots k\} \  $$
Let $M_{\cal J}:=\{ M_{J_1}, \ldots M_{J_k}\}$, $\psi_{\cal{J}}:=\{ \psi_{J_1}, \ldots \psi_{J_k}\}$. Let $\vec{1}^{(i)}\in\R^N$ given by $1^{(i)}_j=1$ if $j\in J_i$, $1^{(i)}_j=0$ if $j\not\in J_i$. Let $\R_{\cal{J}}:= \text{Span}\left\{ \vec{1}^{(1)}, \ \ldots  \vec{1}^{(k)}\right\}\subset \R^N$.
 In analogy to (\ref{defSigma}) we set
\be\label{defSigmaJ}\Sigma_{\psi_{\cal J}}(\vM_{\cal J}):=
 \max_{\vA\in {\cal P}_{\vM_{\cal J}}} \sum_{J\in {\cal J}}P_{\psi_J}(A_J) \ .  \  \ee
\be\label{XiJ}\Xi^0_{\vpsi_{\cal J}}(\vp):=\int_X\max_{J\in {\cal J}}[\psi_J(x)-p_J]_+d\mu(x):\R^k\rightarrow\R  \ . \ee
\begin{prop}\label{propcoal}
Assume the conditions of Theorem \ref{old}. Given a partition $\cal{J}$ of $I$ as above, we get  in the US,S cases
$$ \Sigma_{\vpsi_{\cal J}}(\vM_{\cal J}) = \min_{\vp\in \R_{\cal{J}}}\Xi^0_{\vpsi_{\cal J}}(\vp)+\vp\cdot\vM_{\cal J} \ . $$
where $\Xi^0_{\vpsi_J}$ as defined in (\ref{XiJ}). In the OS case,
$$ \Sigma_{\vpsi_{\cal J}}(\vM_{\cal J}) = \max_{\vm\leq \vM}\min_{\vp\in \R_{\cal{J}}}\Xi^0_{\vpsi_{\cal J}}(\vp)+\vp\cdot \vM_{\cal J} \ . $$
 In all cases there exists a unique equilibrium price vector $\vp_0\in\R_{\cal J}$, and the corresponding partition is the unique partition verifying
 (\ref{defSigmaJ}).
\end{prop}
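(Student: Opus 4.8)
The plan is to read Proposition \ref{propcoal} as nothing more than Theorem \ref{old} applied to the reduced system of $k$ \emph{super-agents} indexed by the blocks $J_1,\dots,J_k$ of $\mathcal J$, combined with a bookkeeping dictionary that identifies the constrained price space $\R_{\cal J}$ with $\R^k$. So the real work is to check that the super-agent data $(\psi_{\cal J},\vM_{\cal J})$ is a legitimate input for Theorem \ref{old}, and then to translate its conclusion back into the coordinates of the original $N$ agents.

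First I would verify that the $k$-agent system $(\psi_{\cal J},\vM_{\cal J})$ on $(X,\mu)$ satisfies Assumption \ref{funda}. Continuity and $\mu$-a.e.\ positivity of each $\psi_J=\max_{i\in J}\psi_i$ are inherited because a finite maximum preserves both. For the twist-type conditions I would use the pointwise inclusions $\{x:\psi_J(x)-\psi_{J'}(x)=r\}\subset\bigcup_{i\in J,\,j\in J'}\{x:\psi_i(x)-\psi_j(x)=r\}$ and, in the US case, $\{x:\psi_J(x)=r\}\subset\bigcup_{i\in J}\{x:\psi_i(x)=r\}$; each right-hand side is a finite union of $\mu$-null sets by (\ref{post}) and (\ref{post0}), so these conditions hold for the super-agents as well. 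This is the step I expect to carry the only genuine (if mild) content; everything after it is formal.

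Next I would invoke Theorem \ref{old} verbatim for this $k$-agent system, which yields, in the US/S cases, $\Sigma_{\vpsi_{\cal J}}(\vM_{\cal J})=\min_{\vq\in\R^k}\Xi^0_{\vpsi_{\cal J}}(\vq)+\vq\cdot\vM_{\cal J}$ with an equilibrium price vector (unique in US, unique up to a nonpositive shift in S, or made unique by the normalization of Remark \ref{remhadash}) and a unique optimal partition, and in the OS case the corresponding $\max$--$\min$ formula together with the uniquely determined effective capacities. The crux is then the translation: the linear isomorphism $\vq\mapsto\sum_{l=1}^k q_l\,\vec{1}^{(l)}$ maps $\R^k$ onto $\R_{\cal J}$ and sends $\vq$ to the price vector $\vp\in\R^N$ that is constant, equal to $q_l$, on each block $J_l$. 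Under it $\Xi^0_{\vpsi_{\cal J}}$ is literally unchanged, while the capacity-aggregation rule (\ref{MJ}) gives $\vq\cdot\vM_{\cal J}=\sum_l q_l M_{J_l}=\sum_l q_l\sum_{i\in J_l}M_i=\vp\cdot\vM$. Hence $\min_{\vq\in\R^k}$ becomes $\min_{\vp\in\R_{\cal J}}$ and the stated US/S identity follows; in the OS case, since a block-constant $\vp$ pairs with a capacity $\vm$ only through the block sums $\sum_{i\in J_l}m_i$, the maximization over $\{\vm\le\vM\}$ in $\R^N$ reduces to the super-agent maximization over $\{\vm_{\cal J}\le\vM_{\cal J}\}$ in $\R^k$, matching the OS case of Theorem \ref{old}.

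Finally, to confirm that the optimal (big-brother) partition agrees with the free-market cartel partition, I would observe that for $\vp\in\R_{\cal J}$ every block carries a single price $p_J$, so a consumer assigned to block $J$ is routed to the agent $i\in J$ achieving $\psi_J(x)=\max_{i\in J}\psi_i(x)$; consequently $\bigcup_{i\in J}A_i^+(\vp)$ coincides with the super-agent cell built from $\psi_J$ via (\ref{A+i}). The uniqueness clause of Theorem \ref{old} then identifies this cell with the maximizer of (\ref{defSigmaJ}), giving both the uniqueness of $\vp_0\in\R_{\cal J}$ and of the partition.
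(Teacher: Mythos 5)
Your proof is correct and follows essentially the same route as the paper's: check Assumption \ref{funda} for the super-agent system via the level-set inclusions $\{x:\psi_J(x)-\psi_{J'}(x)=r\}\subset\bigcup_{i\in J,\,j\in J'}\{x:\psi_i(x)-\psi_j(x)=r\}$, apply Theorem \ref{old} to the reduced $k$-agent system, and use the block-constant-price identity $\max_{j\in I}[\psi_j(x)-p_j]_+=\max_{J\in {\cal J}}[\psi_J(x)-p_J]_+$ to match the original-agent cells with the super-agent cells. Your write-up is a bit more explicit than the paper's (the $\R^k\leftrightarrow\R_{\cal J}$ isomorphism and the OS-case bookkeeping), but the underlying argument is the same.
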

\begin{proof}
First note that
$$ \{ x\in X; \psi_{J}(x)-\psi_{J^{'}}(x)=r \}\subset \cup_{j\in J, i\in J^{'}} \{ x\in X; \psi_j(x)-\psi_i(x)=r\}$$
for any $J, J^{'}\in \vec{J}$ so
 (\ref{post}, \ref{post0}) imply, for any $r\in\R$,
 $$\mu(x; \psi_J(x)-\psi_{J^{'}}(x)=r)=0 \ \ ; \ \ \ \mu(x; \psi_J(x)=r)=0  \ \ \forall  \ J,  \ J^{'}\in {\cal J} \ . $$
Hence, the conditions of Theorem \ref{old} hold for this modified setting. The only thing left to show is that for each $\vp\in \R_{\cal{J}}$, the associated partition $\vA^+(\vp)$  (c.f. \ref{A+0}, \ref{A+i}) is in ${\cal P}_{M_{\cal J}}$. This follows immediately from the following trivial  identity: if $\vp:= p_{J_1}\vec{1}^{(1)}+ \ldots +p_{J_k}\vec{1}^{(k)}\in \R_{\cal{J}}$ then
$$ \max_{j\in I}[\psi_j(x)-p_j]_+\equiv \max_{J\in {\cal J}}[\psi_J(x)-p_J]_+ \ . $$
Thus, the Proposition follows from Theorem \ref{old} applied to this modified system.
\end{proof}
Note that Proposition \ref{propcoal} does not say anything about the partition {\em inside} a coalition $J$. In fact, it determines only the set of consumers
$\bar{A}^+_J$ of this coalition. In particular, the  joint value of the coalition $J$ is
$$ \bar{P}_J(\vM, \vpsi):= \int_{\bar{A}^+_J}\psi_Jd\mu \ . $$
Do the agents of this coalition benefit from their collaboration? A necessary condition for this is that the combined  value of the coalition  $J$  is, at least, not smaller  than the sum of the values of its members standing alone, namely
\be\label{motive} \sum_{i\in J} \bar{P}_{\{i\}}(\vM, \vpsi)\leq \bar{P}_J(\vM, \vpsi) \  \ \ ? \ee
where $\bar{P}_{\{i\}}(\vM, \vpsi)$ is the i.v of $i\in J$ determined by the assumption that the coalition $J$ is disintegrated and its members are competing as individuals.  In order to test the inequality, suppose $N=3$ and $J_1=\{1,2\}$, $J_2=\{3\}$. Suppose further that $M_2$ is very small, so $\bar{P}_2(\vM, \vpsi)\approx 0$ while the i.v of agent 1, standing alone, is the same as her i.v if she competes with agent 3 only, and her wisdom is $\psi_1$.\par
On the other hand, the i.v. of the coalition $J$ is approximated by the i.v of agent 1 playing against agent 3, where now her wisdom is, by Proposition \ref{propcoal},  $\psi_{1,2}:= \max(\psi_1, \psi_2)$. Hence (\ref{motive}) is reduced into
\be\label{contra0}\bar{P}_{\{1\}}(M_1,M_3,\psi_1,\psi_3) \leq \bar{P}_{\{1\}}(M_1,M_3, \psi_{1,2}, \psi_3) \ \ ? \ee
Since $\psi_{1,2}\geq \psi_1$ it raises the question of {\em monotonicity of i.v} with respect to their wisdom. We may question this intuition, based on  Theorem \ref{new1} in Section \ref{indipro}.

  \subsection{Review on cooperative games}\label{gamerev}
  A cooperative game is a game where groups of players ("coalitions") may enforce cooperative behavior,  hence the game is a competition between coalitions of players, rather than between individual players.
  \par
  This section is based on the monograph \cite{Gi}.
  \begin{defi}\label{deficoal0}
  A {\it cooperative  game} (CG game) in $I:=\{1, \ldots N\}$ is given  by a {\em reward function} $\nu$  on the subsets of $I$:
$$ \nu: 2^I\rightarrow \R \ \ , \ \ \nu(\emptyset)=0 \ . $$
\end{defi}
  \begin{defi}
The {\it core} of a game $\nu:2^I\rightarrow\R_+$,
 $Core(\nu)$,  is a set of vectors $\vx:=(x_1, \ldots x_N)\in \R^N$ which satisfy the following conditions
\be\label{fs1} \sum_{i\in I} x_i\leq \nu(I)  \ee
and
\be\label{fs2} \forall J\subseteq I, \ \ \sum_{i\in J} x_j\geq \nu(J) \ . \ee
The {\it grand coalition} $J=I$ is \underline{\em stable} if the core is not empty.
\end{defi}
 Non-emptiness of the
core guarantees that no sub-coalition $J$ of the grand coalition $I$ will be formed. Indeed, if such a sub-coalition $J$ is formed, its reward $\nu(J)$ is not larger than the sum of the rewards of its members, guaranteed by the grand coalition.

In many cases, however, the core is empty.


We can easily find a necessary condition for the core to be non-empty. Suppose we divide $I$ into a  set of coalitions $J_k\subset I$ , $k=1, \ldots m$  such that $J_k\cap J_{k^{'}}=\emptyset$ for $k\not= k^{'}$ and $\cup_{k=1}^m J_j=I$.
\begin{prop}\label{pncI}
 For any such division, the condition
\be\label{ncI} \sum_{k=1}^m \nu(J_k)\leq \nu(I)\ee
is  necessary  for the grand coalition to be stable.
\end{prop}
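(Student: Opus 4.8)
The plan is to argue directly from the definition of the core, since "grand coalition is stable" means by definition that $Core(\nu)\neq\emptyset$. So I would begin by fixing an arbitrary allocation $\vx=(x_1,\ldots,x_N)\in Core(\nu)$, which exists under the stability hypothesis. The whole proof then amounts to chaining together the two defining inequalities (\ref{fs1}) and (\ref{fs2}) of the core across the given partition.

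First I would apply the coalition lower bound (\ref{fs2}) to each block $J_k$ of the division separately: since each $J_k\subseteq I$, we have $\sum_{i\in J_k}x_i\geq \nu(J_k)$ for every $k\in\{1,\ldots,m\}$. Summing these $m$ inequalities yields
$$ \sum_{k=1}^m\nu(J_k)\leq \sum_{k=1}^m\sum_{i\in J_k}x_i \ . $$
The key step — and essentially the only place the hypotheses on the $J_k$ are used — is to recognize the right-hand side as $\sum_{i\in I}x_i$. This is exactly where disjointness ($J_k\cap J_{k'}=\emptyset$) and covering ($\cup_k J_k=I$) enter: each index $i\in I$ lies in one and only one block $J_k$, so the double sum counts every $x_i$ precisely once, collapsing to the full sum over $I$ with no omission or repetition.

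Finally I would invoke the efficiency/feasibility bound (\ref{fs1}), namely $\sum_{i\in I}x_i\leq\nu(I)$, to close the chain:
$$ \sum_{k=1}^m\nu(J_k)\leq \sum_{i\in I}x_i\leq \nu(I) \ , $$
which is the desired necessary condition (\ref{ncI}). I do not anticipate any real obstacle here; the statement is a one-line consequence of the core axioms, and the only point requiring the slightest care is the bookkeeping in the collapse of the double sum, which is guaranteed by the assumption that $\{J_k\}$ is a genuine partition of $I$ rather than an arbitrary family of coalitions.
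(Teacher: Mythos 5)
Your proof is correct and is essentially the paper's own argument: both take a core element $\vx$, apply (\ref{fs2}) blockwise, use disjointness and covering to collapse $\sum_{k}\sum_{i\in J_k}x_i$ to $\sum_{i\in I}x_i$, and finish with (\ref{fs1}). The only cosmetic difference is that the paper phrases the chain of inequalities as a contradiction while you state it directly.
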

\begin{proof}
Suppose $\vx\in Core(\nu)$.  Let $\tilde{\nu}(J):= \sum_{i\in J} x_i$. Then $\tilde{\nu}(J)\geq \nu(J)$ for any $J\subseteq I$. If (\ref{ncI}) is violated for some division $\{J_1, \ldots J_m\}$, then $\sum_{k=1}^m\tilde{\nu}(J_k)\geq \sum_{k=1}^m\nu(J_k) >\nu(I)$. On the other hand, $\sum_{k=1}^m\tilde{\nu}(J_k)=\sum_{i\in I}x_i\leq \nu(I)$, so we get a contradiction.
\end{proof}
Note that {\em super-additivity}
\be\label{sad} \nu(J_1)+\nu(J_2)\leq \nu(J_1\cup J_2) \ \ \ \forall \ J_1\cap J_2=\emptyset \ee
is a sufficient condition for (\ref{ncI}). However, (\ref{sad}) by itself is {\em not} a sufficient condition for   the stability of the grand coalition.
\begin{example}
In case $N=3$ the  game $\nu(1)=\nu(2)=\nu(3)=0$, $\nu(12)=\nu(23)=\nu(13)=3/4$, $\nu(123)=1$ is super-additive but its core is empty.
\end{example}
We may extend condition (\ref{ncI}) as follows: A {\it weak division} is a function $\lambda: 2^I\rightarrow \R$ which satisfies the following:
\begin{description}
\item {i)} \ For any $J\subseteq I$, $\lambda(J)\geq 0$.
\item{ii)} \ For any $i\in I$, $\sum_{J\subseteq I; i\in J} \lambda(J)=1$.
\end{description}
A collection of such sets $\{J\subset I; \lambda(J)>0\}$ verifying (i,ii) is called  a \underline{\em balanced collection} \cite{Gi}.
\par
We can think about $\lambda(J)$ as the probability of the coalition $J$. Note that any division $\{J_1, \ldots J_m\}$ is, in particular, a weak division where $\lambda(J)=1$ if $J\in \{J_1, \ldots  J_m\}$, and $\lambda(J)=0$ otherwise.
\par
It is not difficult to extend the necessary condition (\ref{ncI}) to weak subdivisions as follows:
\begin{prop}\label{pnscI}
For any weak subdivision $\lambda$, the condition
\be\label{nscI} \sum_{J\in 2^I} \lambda(J)\nu(J)\leq \nu(I) \  \ee
is  necessary  for the grand coalition to be stable.
\end{prop}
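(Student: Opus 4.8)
The plan is to imitate the proof of Proposition \ref{pncI} almost verbatim, simply replacing the indicator of a single division by the nonnegative weights $\lambda(J)$ of the weak subdivision, and using property (ii) in place of the disjoint-covering condition. First I would argue by contraposition on the definition of stability: assume the grand coalition is stable, so the core is nonempty, and fix any $\vx=(x_1,\ldots,x_N)\in Core(\nu)$. The two defining properties (\ref{fs1}) and (\ref{fs2}) of the core are then available, namely $\sum_{i\in I}x_i\leq\nu(I)$ and $\nu(J)\leq\sum_{i\in J}x_i$ for every $J\subseteq I$.

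The key step is a reversal of the order of summation. Since $\lambda(J)\geq 0$ by condition (i) of a weak division, I may multiply the inequality $\nu(J)\leq\sum_{i\in J}x_i$ by $\lambda(J)$ and sum over all $J\in 2^I$ without disturbing the direction of the inequality, obtaining
\be\label{swapstep}
\sum_{J\in 2^I}\lambda(J)\nu(J)\;\leq\;\sum_{J\in 2^I}\lambda(J)\sum_{i\in J}x_i\;=\;\sum_{i\in I}x_i\sum_{J\subseteq I;\, i\in J}\lambda(J)\;=\;\sum_{i\in I}x_i\, ,
\ee
where the middle equality is just Fubini for finite sums (each term $\lambda(J)x_i$ with $i\in J$ is counted once either way), and the last equality is exactly property (ii), $\sum_{J\subseteq I;\,i\in J}\lambda(J)=1$, applied for each fixed $i$. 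Combining (\ref{swapstep}) with (\ref{fs1}) yields $\sum_{J\in 2^I}\lambda(J)\nu(J)\leq\sum_{i\in I}x_i\leq\nu(I)$, which is precisely (\ref{nscI}). Thus whenever a point of the core exists, (\ref{nscI}) must hold for every weak subdivision $\lambda$, establishing necessity.

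I do not anticipate any genuine obstacle here: the statement is the \emph{necessary} (easy) half of the Bondareva--Shapley balancedness characterization, and the argument is essentially a one-line duality estimate once the summation is interchanged. The only points requiring minor care are bookkeeping ones: confirming that the interchange of summation in (\ref{swapstep}) is legitimate (it is, since all sums are finite), and checking the degenerate indices, e.g.\ the term $J=\emptyset$ contributes nothing because $\nu(\emptyset)=0$, so it is harmless to include it in $\sum_{J\in 2^I}$. I would also remark that Proposition \ref{pncI} is recovered as the special case in which $\lambda$ is the $\{0,1\}$-valued weak subdivision associated with a partition $\{J_1,\ldots,J_m\}$, which makes the present statement a strict generalization.
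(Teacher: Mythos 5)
Your proof is correct and is exactly the ``slight modification'' of the proof of Proposition \ref{pncI} that the paper invokes: take a core element $\vx$, weight the core inequalities (\ref{fs2}) by $\lambda(J)\geq 0$, swap the finite sums, and use property (ii) of a weak division together with (\ref{fs1}). The only cosmetic difference is that you argue directly while the paper's model argument runs by contradiction; the content is identical.
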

The proof of Proposition \ref{pnscI} is a slight modification of the proof of Proposition \ref{pncI}.

However, (\ref{nscI}) is also a sufficient condition for the stability of the grand coalition $I$. This is the content of Bondareva-Shapley Theorem \cite{bon}, \cite{shap1}, \cite{shap2}:
 \begin{theorem}\label{shaply}
 The  grand coalition  is stable if and only if it satisfies (\ref{nscI}) for any weak division $\lambda$.
 \end{theorem}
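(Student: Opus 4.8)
The plan is to recast the core-nonemptiness question as a linear program and invoke strong LP duality; condition (\ref{nscI}) will turn out to be precisely the statement that the dual optimum does not exceed $\nu(I)$. Since the necessity of (\ref{nscI}) is already Proposition \ref{pnscI}, the real content is the \emph{sufficiency} direction, and that is where the duality argument does the work.

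First I would observe that taking $J=I$ in (\ref{fs2}) forces $\sum_{i\in I}x_i\geq\nu(I)$, while (\ref{fs1}) demands $\sum_{i\in I}x_i\leq\nu(I)$, so every core vector in fact satisfies $\sum_{i\in I}x_i=\nu(I)$. Consequently $Core(\nu)\neq\emptyset$ if and only if the value of the primal program
\begin{equation}
v^\star:=\min_{\vx\in\R^N}\Big\{\sum_{i\in I}x_i \ : \ \sum_{i\in J}x_i\geq\nu(J)\ \ \forall\,\emptyset\neq J\subseteq I\Big\}
\end{equation}
equals $\nu(I)$. The constraint indexed by $J=I$ already yields $v^\star\geq\nu(I)$, and the program is feasible (take all $x_i$ large), so $v^\star$ is finite and bounded below by $\nu(I)$; thus the only inequality genuinely at stake is the reverse one, $v^\star\leq\nu(I)$.

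Next I would pass to the dual. The variables $x_i$ are free and the primal is a minimization with $\geq$ constraints, so attaching a multiplier $\lambda(J)\geq 0$ to each constraint produces the dual
\begin{equation}
\max_{\lambda}\Big\{\sum_{\emptyset\neq J\subseteq I}\lambda(J)\nu(J)\ :\ \lambda(J)\geq 0,\ \sum_{J\ni i}\lambda(J)=1\ \ \forall\,i\in I\Big\}.
\end{equation}
The equality constraints arise exactly because the primal variables $x_i$ are free, and the feasible region of this dual is, word for word, the conditions (i) and (ii) defining a weak division. This is the crux of the argument: dual-feasible points \emph{are} the balanced collections, and the dual objective is $\sum_J\lambda(J)\nu(J)$.

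Finally I would invoke strong duality. Both programs are feasible (the singleton weighting $\lambda(\{i\})=1$ for all $i$ is dual-feasible, and the primal is feasible as noted), and the primal is bounded below, so strong duality gives $v^\star=\max_\lambda\sum_J\lambda(J)\nu(J)$. Condition (\ref{nscI}) asserts precisely that every weak division satisfies $\sum_J\lambda(J)\nu(J)\leq\nu(I)$, i.e.\ the dual maximum is $\leq\nu(I)$, which is exactly the missing inequality $v^\star\leq\nu(I)$; hence $v^\star=\nu(I)$ and the core is nonempty. The main obstacle I anticipate is not the algebra but the careful bookkeeping of the duality: matching free primal variables to dual equalities and $\geq$-constraints to nonnegative multipliers, and checking the feasibility/boundedness hypotheses that license strong duality. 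Once the dual feasible set is correctly identified with the balanced collections, the theorem falls out immediately.
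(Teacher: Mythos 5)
Your proof is correct, and it is exactly the classical LP-duality argument of Bondareva and Shapley: the paper itself gives no proof of Theorem \ref{shaply}, citing \cite{bon}, \cite{shap1}, \cite{shap2}, and your argument is the standard one contained in those references. The bookkeeping checks out: core vectors are precisely the optimal solutions of your primal (since the constraint for $J=I$ together with (\ref{fs1}) forces equality), the dual feasible set is word-for-word the set of weak divisions of Definition in Section \ref{gamerev}, and strong duality applies because the primal is feasible and bounded below while the dual is feasible via the singleton weighting; the necessity direction is correctly delegated to Proposition \ref{pnscI}.
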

The condition of Theorem \ref{shaply} is easily verified for super-additive game in case $N=3$.
\begin{cor}\label{corrrrr}A super additive cooperative game of 3 agents ($N=3$) admits a non-empty core iff
\be\label{onlytp} \nu(12)+\nu(13)+\nu(23)<2\nu(123) \ . \ee
\end{cor}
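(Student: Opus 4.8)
The plan is to apply the Bondareva--Shapley criterion (Theorem \ref{shaply}): the grand coalition is stable, i.e. $Core(\nu)\neq\emptyset$, if and only if (\ref{nscI}) holds for every weak division $\lambda$. Since the set of weak divisions is a polytope whose vertices are the \emph{minimal} balanced collections, it suffices to verify (\ref{nscI}) on these. For $N=3$ the minimal balanced collections are exactly the partition $\{\{1\},\{2\},\{3\}\}$, the three partitions of the form $\{\{i,j\},\{k\}\}$, the grand coalition $\{\{1,2,3\}\}$ itself, and the single non-partition collection $\{\{1,2\},\{1,3\},\{2,3\}\}$ carrying the weights $\lambda(12)=\lambda(13)=\lambda(23)=\tfrac12$ (each agent lies in exactly two of the pairs, so condition (ii) of a weak division holds). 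The constraints coming from all the partition-type collections are instances of (\ref{ncI}) and are therefore already guaranteed by super-additivity (\ref{sad}), while the grand coalition gives the trivial $\nu(123)\le\nu(123)$. Thus the only genuinely new constraint is the one from the triangle collection, namely $\tfrac12\big(\nu(12)+\nu(13)+\nu(23)\big)\le\nu(123)$, which is precisely (\ref{onlytp}).

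For \textbf{necessity} I would simply note that $\lambda(12)=\lambda(13)=\lambda(23)=\tfrac12$ (and $\lambda=0$ elsewhere) is a legitimate weak division, so Proposition \ref{pnscI} applies and yields $\nu(12)+\nu(13)+\nu(23)\le 2\nu(123)$ directly, with no appeal to the classification above.

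For \textbf{sufficiency} I would avoid re-deriving Bondareva--Shapley and instead construct a core vector explicitly. Imposing efficiency $x_1+x_2+x_3=\nu(123)$ converts each pairwise constraint $x_i+x_j\ge\nu(ij)$ into the box constraint $x_k\le\nu(123)-\nu(ij)$ for $\{i,j,k\}=\{1,2,3\}$, while the singleton constraints read $x_k\ge\nu(k)$. Hence an element of $Core(\nu)$ exists as soon as the box $\prod_k[\nu(k),\,\nu(123)-\nu(ij)]$ is nonempty and meets the hyperplane $\sum_k x_k=\nu(123)$. Nonemptiness of each factor, $\nu(k)\le\nu(123)-\nu(ij)$, is exactly super-additivity $\nu(k)+\nu(ij)\le\nu(123)$; the lower corners obey $\sum_k\nu(k)\le\nu(123)$ (again by (\ref{sad})); and the upper corners obey $\sum_k(\nu(123)-\nu(ij))=3\nu(123)-\big(\nu(12)+\nu(13)+\nu(23)\big)\ge\nu(123)$ precisely because of (\ref{onlytp}). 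Since the target $\nu(123)$ lies between the sum of the lower corners and the sum of the upper corners, raising the coordinates coordinatewise from the lower corner until the total reaches $\nu(123)$ lands inside the box, producing the desired core point.

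The main obstacle is the reduction to minimal balanced collections in the first paragraph: one must know that checking (\ref{nscI}) at the finitely many vertices of the balanced polytope suffices, and that for $N=3$ these vertices are the six collections listed. The explicit construction in the sufficiency paragraph is the clean way to sidestep that machinery, so that in the end only Proposition \ref{pnscI} and super-additivity (\ref{sad}) are actually needed. One caveat worth flagging is that the argument yields the nonstrict inequality $\nu(12)+\nu(13)+\nu(23)\le 2\nu(123)$; the boundary case of equality still produces a (possibly degenerate) core point, so the strict form in (\ref{onlytp}) should be read with that in mind.
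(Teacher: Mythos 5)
Your proof is correct, and its necessity half coincides with the paper's, but the sufficiency half takes a genuinely different route. The paper's entire argument is your first paragraph: it invokes Bondareva--Shapley (Theorem \ref{shaply}) and asserts, without proof, that for $N=3$ all weak divisions are spanned by the trivial (partition) ones together with the triangle collection $\lambda(12)=\lambda(13)=\lambda(23)=\tfrac12$; super-additivity kills the partition constraints via (\ref{ncI}), leaving exactly (\ref{onlytp}). Your sufficiency argument instead constructs a core point explicitly: imposing efficiency $\sum_k x_k=\nu(123)$, the core conditions become the box $\prod_k\bigl[\nu(k),\,\nu(123)-\nu(ij)\bigr]$ intersected with that hyperplane, and nonemptiness of the intersection follows from super-additivity (\ref{sad}) plus (\ref{onlytp}) by an intermediate-value argument. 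This buys self-containedness: the step the paper waves through (``it can easily be shown'' -- i.e.\ the vertex description of the balanced polytope for $N=3$) is precisely the machinery you sidestep, so your final proof needs only Proposition \ref{pnscI} and (\ref{sad}), whereas the paper's route is shorter and sits in the natural general framework of Theorem \ref{shaply}. Your closing caveat is also a genuine (minor) correction to the paper: both your construction and Theorem \ref{shaply} as stated in (\ref{nscI}) characterize nonemptiness of the core by the \emph{nonstrict} inequality $\nu(12)+\nu(13)+\nu(23)\le 2\nu(123)$ -- in the equality case the vector $x_k=\nu(123)-\nu(ij)$, $\{i,j,k\}=\{1,2,3\}$, still lies in the core -- so the strict inequality displayed in (\ref{onlytp}) is not literally an equivalence and should be read with $\le$.
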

Indeed, it can easily be shown that all weak subdivision for  $N=3$ are spanned by
$$\lambda(J)=1/2 \ \ \text{if} \ \ J=(12), (13), (23) \ \ \ ; \ \ \ \lambda(J)=0 \ \ \text{otherwise}  \ , $$
 and the trivial ones.

\subsection{Back to  coalition/partition game}
We define a cooperative game for the $N$ agents which is based on the following:
\begin{itemize}
\item Let $\vpsi$  be a wisdom vector  verifying Assumption \ref{funda}, and $\vM$ a capacity vector in saturation (i.e $\sum_{i\in I}M_i=\mu(X)$).
\item
Let $J\subseteq I$ be a coalition, and $J^{-}:=I-J$ is the complementary coalition.
\item
Consider the two superagents $J, J^{-}$ subjected to the capacities $M_J, M_{J^-}$ and wisdoms $\psi_J, \psi_{J^{-}}$ as defined in  (\ref{MJ}, \ref{psiJ}).
\item
By Theorem \ref{old} and Proposition \ref{propcoal} there exists a unique maximizer $\bA_J, \bA_{J^{-}}$ of
$$(A_J, A_{J^{-}})\in {\cal P}_{M_J, M_{\Jm}}\mapsto \int_{A_J}\psi_Jd\mu + \int_{A_{J^{-}}}\psi_{\Jm}  d\mu \ .  $$
 \item Let $\bar{P}_J:=\int_{\bar{A}_J}\psi_Jd\mu$ be the i.v of the coalition $J$ under optimal partition.
  \begin{defi}\label{deficoal}
  A {\it coalition/partition  (CP) game} is given  by a function on the subsets of $I$:
$$ \nu: 2^I\rightarrow \R_+ \ \ ; \ \ \   \nu(J):= \bar{P}_J \ . $$
In particular, $\nu(I)=\int_X\psi_Id\mu$, and $\nu(\emptyset)=0$.
\end{defi}
\end{itemize}

  Note that the CP game satisfies the following condition: For each $J\subset I$,
\be\label{nc1}\nu(J) + \nu(J^-)\leq \nu(I) \ \ \forall \ \ J\subset I \ , \ee
which is  a necessary condition for super-additivity (\ref{sad}).

However, a  CP game is not super-additive in general. In fact, the inequality (\ref{contra0})  may be violated, as  we know from Theorem \ref{new1}.

There is a special case, introduced in Example \ref{exlp}, Section \ref{firstsec},  for which we can guarantee super-additivity for PC game:
\begin{assumption}\label{asspsi}
There exists non-negative $\psi:X\in C(X)$ satisfying $\mu(x; \psi(x)=r)=0$ for any $r\in \R$. The wisdoms $\psi_i$ are given by $\psi_i=\lambda_i\psi$ where  $\vec{\lambda}:=(\lambda_1, \ldots \lambda_N)\in \R^N_+$ such that $0<\lambda_1<\ldots <\lambda_N$. We further assume the saturation case $\sum_{i=1}^N M_i=\mu(X)$.
\end{assumption}
\begin{prop}\label{propmulti}
 Under assumption \ref{asspsi}, for any $\vM\in\R^N_+$, the corresponding CP game is super-additive.
\end{prop}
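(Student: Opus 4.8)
The plan is to exploit the explicit description of the optimal partition in the multiplicative case worked out in Example \ref{exlp}. Since $\psi\geq 0$, the superagent wisdoms collapse to $\psi_J=\lambda_J\psi$ with $\lambda_J:=\max_{i\in J}\lambda_i$, so the two-superagent game $J$ versus $J^-$ is simply a contest between the proportional wisdoms $\lambda_J\psi$ and $\lambda_{J^-}\psi$. Exactly one of $J,J^-$ contains the top index $N$ (the one with the largest multiplier $\lambda_N$), and that coalition has the strictly larger multiplier; by Example \ref{exlp} it receives the top level set of $\psi$ of $\mu$-measure $M_J$, while the other receives the complementary bottom set. Introducing the partial integrals $G(m):=\int_0^m\psi^\ast$ and $H(m):=\int_{\mu(X)-m}^{\mu(X)}\psi^\ast$ of the nonincreasing rearrangement $\psi^\ast$ of $\psi$ — equivalently $H=F^\ast$ and $G(m)=F^\ast(\mu(X))-F^\ast(\mu(X)-m)$ in the notation of Example \ref{exlp} — I can write the reward compactly as $\nu(J)=\lambda_J\,G(M_J)$ when $N\in J$ and $\nu(J)=\lambda_J\,H(M_J)$ when $N\notin J$. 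Because $\psi^\ast$ is nonincreasing, $G$ is concave, while $H$ is convex and increasing with $H(0)=0$; these are the only analytic properties of $\psi$ I shall use.

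With this dictionary, I would establish super-additivity $\nu(J_1)+\nu(J_2)\leq\nu(J_1\cup J_2)$ for disjoint $J_1,J_2$ (writing $J:=J_1\cup J_2$, so $M_J=M_{J_1}+M_{J_2}\leq\mu(X)$) by splitting on the location of $N$. If $N\notin J$, then each of the three coalitions loses to its complement and receives a bottom set, so its reward is $\lambda_{(\cdot)}H(M_{(\cdot)})$. Taking $\lambda_{J_1}\geq\lambda_{J_2}$ without loss (hence $\lambda_J=\lambda_{J_1}$), convexity of $H$ with $H(0)=0$ yields super-additivity $H(M_{J_1})+H(M_{J_2})\leq H(M_J)$, and since $H\geq 0$ and $\lambda_{J_2}\leq\lambda_{J_1}$, I get $\nu(J_1)+\nu(J_2)\leq\lambda_{J_1}\bigl(H(M_{J_1})+H(M_{J_2})\bigr)\leq\lambda_{J_1}H(M_J)=\nu(J)$.

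The substantive case is $N\in J$, say $N\in J_1$; then $J_1$ and $J$ win (top sets) while $J_2$ loses (bottom set), giving $\nu(J_1)=\lambda_N G(M_{J_1})$, $\nu(J)=\lambda_N G(M_J)$ and $\nu(J_2)=\lambda_{J_2}H(M_{J_2})$ with $\lambda_{J_2}<\lambda_N$. Bounding $\lambda_{J_2}H(M_{J_2})\leq\lambda_N H(M_{J_2})$ (using $H\geq0$) reduces the claim to the purely geometric inequality $G(M_{J_1})+H(M_{J_2})\leq G(M_J)$, i.e. $G(M_J)-G(M_{J_1})\geq H(M_{J_2})$. This step is the main obstacle, and it is exactly where the rearrangement structure of $\psi$ is essential: the left-hand side is $\int_{M_{J_1}}^{M_J}\psi^\ast$, the integral over the slice $[M_{J_1},M_J]$ of length $M_{J_2}$, while the right-hand side is $\int_{\mu(X)-M_{J_2}}^{\mu(X)}\psi^\ast$, the integral over the bottom slice of the same length. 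Since $M_J=M_{J_1}+M_{J_2}\leq\mu(X)$ forces $M_{J_1}\leq\mu(X)-M_{J_2}$, the winning slice sits at smaller (hence larger-valued) arguments of the nonincreasing $\psi^\ast$, so $\psi^\ast(M_{J_1}+u)\geq\psi^\ast(\mu(X)-M_{J_2}+u)$ for $u\in[0,M_{J_2}]$ and the inequality follows by integration. Equivalently, one may phrase it as $G(M_J)+G(\mu(X)-M_{J_2})\geq G(M_{J_1})+G(\mu(X))$, which is immediate from concavity of $G$ because the pair $\{M_J,\mu(X)-M_{J_2}\}$ is majorized by $\{M_{J_1},\mu(X)\}$ (equal sums, and the former interval nested in $[M_{J_1},\mu(X)]$).

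Combining the two cases gives super-additivity in the sense of (\ref{sad}); note that the balance inequality (\ref{nc1}) is already the special case $J_2=J_1^-$. The only genuinely nontrivial input is the slice comparison in the third paragraph, everything else being convexity/concavity bookkeeping together with the monotonicity $\lambda_{J_2}\leq\lambda_{J_1}$ of the multipliers.
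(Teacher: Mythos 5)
Your proof is correct and follows essentially the same route as the paper: both rely on the explicit rewards from Example \ref{exlp} (namely $\nu(J)=\lambda_N\bigl(F^*(\mu(X))-F^*(\mu(X)-M_J)\bigr)$ if $N\in J$ and $\nu(J)=\lambda_J F^*(M_J)$ otherwise, which are exactly your $\lambda_N G(M_J)$ and $\lambda_J H(M_J)$), split into cases according to which coalition contains $N$, and reduce everything to super-additivity of $H=F^*$. The only cosmetic difference is that the paper derives this super-additivity from convexity of $F^*$ together with $F^*(0)=0$ (inequality (\ref{FF*})), whereas you obtain the same inequality by comparing slices of the nonincreasing rearrangement of $\psi$ --- equivalent arguments.
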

\begin{proof}
From Example \ref{exlp}
 (in particular from (\ref{unidi})) we obtain that the i.v. of agent $i$ under optimal partition is

 \be\label{1Div}\bar{P}_i\equiv \lambda_i\left(F^*({\cal M}_{i+1})-F^*({\cal M}_{i})\right) \ . \ee

Note that $F^*$ is convex on $[0,1]$ by Example \ref{exlp}. In addition, $F^*(0)=0$.

It follows that for any $\alpha, \beta>0$ such that $\alpha+\beta\leq 1$:
\be\label{FF*} F^*(\alpha+\beta)\geq F^*(\alpha) + F^*(\beta) \ . \ee
Indeed, since $F^*$ is convex
\be\label{F*}F^*(\alpha+\beta)\geq F^*(\alpha) + \beta (F^*)^{'}(\alpha)\  . \ee By the same argument
\be\label{F**}0=F^*(0)\geq F^*(\beta) - \beta(F^*)^{'}(\beta) \ . \ee
Assuming, with no  loss of generality, that $\alpha\geq \beta$,  we get (\ref{FF*}) from the convexity of $F^*$ which implies  $(F^*)^{'}(\alpha)\geq  (F^*)^{'}(\beta)$, together with (\ref{F*}, \ref{F**}).
\par
By the definition of the CP game and (\ref{1Div}) we obtain that
\begin{multline}\label{nunu} \nu(J)= \lambda_N(F^*(\mu(X))-F^*(\mu(X)-M_J)) \ \ \text{if} \ \ \{N\}\in J \ \ \ ; \ \ \\ \nu(J)=\lambda_J(F^*(M_J)-F^*(0))\equiv \lambda_JF^*(M_J) \ \ \text{if} \ \{N\}\not\in J \ . \end{multline}
where $\lambda_J:=\max_{i\in J}\lambda_i$.

Let now  $J_1, J_2\subset I$ such that    $J_1\cap J_2=\emptyset$ (in particular, $M_{J_1}+M_{J_2}\leq \mu(X)$ since $\vM$ is a saturated vector).

\par
Assume first $\{N\}\not\in J_1\cup J_2$. Then from (\ref{nunu}, \ref{FF*})
\begin{multline} \nu(J_1\cup J_2)=\max(\lambda_{J_1}, \lambda_{J_2})F^*(M_{J_1\cup J_2})=  \max(\lambda_{J_1}, \lambda_{J_2})
F^*(M_{J_1}+M_{J_2}) \\ \geq \lambda_{J_1}F^*(M_{J_1})+\lambda_{J_2}F^*(M_{J_2}) = \nu(J_1)+\nu(J_2) \ . \end{multline}
Next, if, say,  $\{N\}\in J_1$ then
$$ \nu(J_1\cup J_2)= \lambda_N(F^*(\mu(X))-F^{*}(\mu(X)-M_{J_1}+M_{J_2})) \ \ , \ \ \nu(J_1)=\lambda_N(F^*(\mu(X))-F^{*}(\mu(X)-M_{J_1}))\ \ ,$$
$$ \nu(J_2)=\lambda_{J_2}F^{*}(M_{J_2})\ , $$
so
$$ \nu(J_1\cup J_2)-\nu(J_1)-\nu(J_2)\geq \lambda_N\left[F^*(1-M_{J_1})-F^*(M_{J_2}) - F^*(1-(M_{J_1}+M_{J_2})\right]\geq 0  \ , $$
again, by (\ref{FF*}).
\end{proof}

Under the  assumption of Proposition \ref{propmulti} we may guess, intuitively,  that the grand coalition is stable if  the  gap between the wisdoms of the agents is   sufficiently large (so the other agents are motivated to join the smartest one), and the capacity of the wisest agent ($N$) is sufficiently small (so (s)he is motivated to join the others as well).  Below we prove this intuition in the case  $N=3$:
\begin{prop}\label{example1}
Under the assumption of Proposition \ref{propmulti} and $N=3$,
$$  \frac{\lambda_3}{\lambda_2}> \frac{F^{*}(M_1+M_2)}{F^*(M_2)+F^*(M_1)}$$
is a necessary and sufficient for the stability of the grand coalition (i.e the non-emptiness  of a core). Here $F^*$ is as defined in Example \ref{exlp} (Section \ref{firstsec}).
\end{prop}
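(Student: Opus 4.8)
The plan is to reduce the question to Corollary \ref{corrrrr}. By Proposition \ref{propmulti} the CP game is super-additive under Assumption \ref{asspsi}, so for $N=3$ the core is non-empty if and only if the strict inequality $\nu(12)+\nu(13)+\nu(23)<2\nu(123)$ holds. It therefore suffices to evaluate the four relevant reward values explicitly and to simplify this inequality into the stated ratio condition.

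To evaluate them I would use formula (\ref{nunu}) from the proof of Proposition \ref{propmulti}, which distinguishes whether the top agent $N=3$ belongs to the coalition. Since $0<\lambda_1<\lambda_2<\lambda_3$ and saturation gives $M_1+M_2+M_3=\mu(X)$, the complementary capacities simplify: $\mu(X)-M_{\{1,3\}}=M_2$ and $\mu(X)-M_{\{2,3\}}=M_1$. Hence $\nu(12)=\lambda_2 F^*(M_1+M_2)$ (as $3\notin\{1,2\}$ and $\lambda_{\{1,2\}}=\lambda_2$), while $\nu(13)=\lambda_3\big(F^*(\mu(X))-F^*(M_2)\big)$, $\nu(23)=\lambda_3\big(F^*(\mu(X))-F^*(M_1)\big)$, and $\nu(123)=\lambda_3 F^*(\mu(X))$.

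Substituting into $\nu(12)+\nu(13)+\nu(23)<2\nu(123)$, the term $2\lambda_3 F^*(\mu(X))$ cancels on both sides and one is left with
\[ \lambda_2 F^*(M_1+M_2)<\lambda_3\big(F^*(M_1)+F^*(M_2)\big). \]
Dividing by the positive quantity $\lambda_2\big(F^*(M_1)+F^*(M_2)\big)$ — positive because $F^*$ is strictly increasing with $F^*(0)=0$ and $M_1,M_2>0$ — yields exactly $\lambda_3/\lambda_2>F^*(M_1+M_2)/\big(F^*(M_2)+F^*(M_1)\big)$, which establishes both the necessity and the sufficiency claimed.

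The only real care needed is bookkeeping: correctly selecting the branch of (\ref{nunu}) for each coalition (those containing the top agent versus those that do not) and using saturation to reduce the complement arguments of $F^*$. There is no genuine analytic obstacle, since super-additivity (Proposition \ref{propmulti}) has already done the work of making Corollary \ref{corrrrr} applicable; the strict inequality in that corollary matches the strict inequality in the statement, so no boundary-case analysis is required beyond noting $F^*(M_1)+F^*(M_2)>0$.
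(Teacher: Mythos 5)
Your proposal is correct and follows essentially the same route as the paper: reduce to Corollary \ref{corrrrr} via the super-additivity established in Proposition \ref{propmulti}, evaluate $\nu(12),\nu(13),\nu(23),\nu(123)$ from (\ref{nunu}) using saturation, and simplify the resulting inequality to the stated ratio condition. Your write-up merely fills in the algebra and the positivity of $F^*(M_1)+F^*(M_2)$ that the paper leaves implicit.
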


\begin{proof}
From  Corollary \ref{corrrrr} and Proposition \ref{propmulti} we have only to prove  (\ref{onlytp}). Now,
$$ \nu(123)=\lambda_3F^*(\mu(X)) \ \ , \nu(13)=\lambda_3(F^*(\mu(X))-F^*(M_2)), \ \  \nu(23)=\lambda_3(F^*(\mu(X))-F^*(M_1))$$ and  $$\nu(12)=\lambda_2F^*(M_1+M_2)  \ . $$
The result follows from substituting the above in (\ref{onlytp}).

\end{proof}

\section{Free Market vs.  Big Brother: Duality}\label{duality}
Here we verify the dual nature of the free-market/big brother formulation and prove Theorem \ref{old}. This duality is also the main tool for proving Theorems \ref{new}, \ref{new1}. The key Lemmas for all these results are the following:
\begin{lemma}\label{lemc1}
Let
 \be\label{Xino0}\Xi_{\vpsi}(\vp):=\int_X\max_{i\in I}[\psi_i(x)-p_i]d\mu(x) \ . \ee
 where $\vpsi:=\{\psi_1, \ldots \psi_N\}$ satisfies  (\ref{post}).      Then $\Xi_{\vpsi}$  is convex  on $\R^N$.
Moreover, it is differentiable at any point and
\be\label{condiff}  \frac{\partial \Xi_{\vpsi}}{\partial p_i}(\vp) = -\mu(A_i(\vp)) \  . \ \ \ee
  Here
\be\label{Aip}A_i(\vp):= \{x\in X; \ \psi_i(x)-p_i<\psi_j(x)-p_j \ \ \forall j\not= i\} \ . \ee
\end{lemma}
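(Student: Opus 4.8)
The plan is to treat $\Xi_{\vpsi}$ as an integral of a parametrized family of elementary convex functions and to transfer convexity and differentiability from the integrand to the integral.

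For convexity, I would fix $x\in X$ and observe that $\vp\mapsto\max_{i\in I}[\psi_i(x)-p_i]$ is a pointwise maximum of the affine maps $\vp\mapsto\psi_i(x)-p_i$, hence convex on $\R^N$. It is continuous in $x$ (a finite maximum of continuous functions), so measurable, and bounded uniformly in $x$ since $X$ is compact and the $\psi_i$ are continuous; as $\mu$ is a probability measure the integral (\ref{Xino0}) is finite. Integrating a family of convex functions against a nonnegative measure preserves convexity, so $\Xi_{\vpsi}$ is convex on $\R^N$.

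For the partial derivatives, I would fix $\vp$ and $i$, let $e_i$ denote the $i$-th standard basis vector, and set $h(x,t):=\max_{j\in I}[\psi_j(x)-p_j-t\,\mathbf{1}_{\{j=i\}}]$, so that $\Xi_{\vpsi}(\vp+te_i)=\int_X h(x,t)\,d\mu$. For each fixed $x$, the map $t\mapsto h(x,t)$ is the maximum of the decreasing affine function $\psi_i(x)-p_i-t$ and the constant $\max_{j\neq i}[\psi_j(x)-p_j]$; it is therefore convex, $1$-Lipschitz in $t$, and its derivative at $t=0$ equals $-1$ when index $i$ strictly attains the maximum (that is, $x\in A_i(\vp)$, the set of (\ref{Aip}) on which the maximum in (\ref{Xino0}) is uniquely achieved at $i$) and $0$ when the reverse strict inequality holds. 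The only exceptional set is the tie set $\{x:\psi_i(x)-p_i=\max_{j\neq i}[\psi_j(x)-p_j]\}$, which is contained in $\cup_{j\neq i}\{x:\psi_i(x)-\psi_j(x)=p_i-p_j\}$ and hence has $\mu$-measure zero by (\ref{post}). Thus $\partial h/\partial t(x,0)$ exists and equals $-\mathbf{1}_{A_i(\vp)}(x)$ for $\mu$-a.e. $x$. Since $|h(x,t)-h(x,0)|\le|t|$ for every $x$ (perturbing one argument of the maximum by $|t|$ moves the maximum by at most $|t|$), the difference quotients are bounded by $1$ uniformly, and dominated convergence gives
\[
\frac{\partial \Xi_{\vpsi}}{\partial p_i}(\vp)=\lim_{t\to 0}\int_X\frac{h(x,t)-h(x,0)}{t}\,d\mu=\int_X\bigl(-\mathbf{1}_{A_i(\vp)}\bigr)\,d\mu=-\mu(A_i(\vp)),
\]
which is (\ref{condiff}).

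Finally, to upgrade the existence of all partial derivatives to full differentiability, I would invoke the standard fact that a finite convex function on $\R^N$ is differentiable at a point precisely when all its first-order partials exist there (equivalently, its subdifferential is a singleton); convexity was secured in the first step. The main point to get right is the a.e.\ identification of the integrand's derivative — specifically the measure-zero estimate for the tie set via (\ref{post}) — together with the uniform Lipschitz bound justifying the interchange of limit and integral; the remaining steps are routine.
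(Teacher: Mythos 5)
Your proof is correct and follows essentially the same route as the paper's: both arguments differentiate under the integral sign, using (\ref{post}) to show the tie sets are $\mu$-null so that the pointwise derivative $-\mathbf{1}_{A_i(\vp)}(x)$ of the integrand exists $\mu$-a.e., and then pass the limit through the integral (you via the uniform $1$-Lipschitz bound and dominated convergence, the paper via uniform integrability of the gradients). Your explicit appeal to the convex-analysis fact that a finite convex function with all first-order partials at a point is differentiable there, and your implicit correction of the inequality in (\ref{Aip}) (which should read $>$, as the paper's own computation and the analogous set (\ref{Aipt}) confirm), only make the argument more complete than the paper's terse version.
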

\begin{proof}
Note that $A_i(\vp)$ are mutually disjoint and, by (\ref{post}),  $\mu(\cup_{i\in I} A_i(\vp))=\mu(X)$. \\
Consider $\xi:X\times\R^N\rightarrow \R$ give by $\xi(x,\vp)=\max_{i\in I}[\psi_i(x)-p_i]$. Then $\xi$ is convex on $\R^N$ for any $x\in X$. Moreover,
$$ \frac{\partial \xi}{\partial p_i}=\left\{ \begin{array}{cc}
                                               -1  \  if & x\in A_i(\vp) \\
                                               0  \ if & \exists j\not= i, \ x\in A_j(\vp)
                                             \end{array}\right. \  $$
                                             In particular, the $\vp$ derivatives of $\xi$ exists $\mu$ a.e in $X$, $\nabla_{\vp}\xi\in \mathbb{L}_1(X)$ for any $\vp\in\R^N$ and are uniformly integrable.
Since $\Xi_{\vpsi}:=\int_X\xi d\mu$ by definition, it is still convex in $\R^N$, its derivatives exists  everywhere
and $\nabla_{\vp}\Xi_{\vpsi}=\int\nabla_{\vp}\xi(x,\vp) d\mu(x)$.  \\

\end{proof}

\begin{lemma}\label{lemc2}
Let $a>0$ and $\vpsi:=\vpsi(x,t):X\times [0,a]\rightarrow \R^N_+$  for any $t\in [0,a]$.  Assume further that each component $t \mapsto\psi_i(x,t)$ is convex and differentiable for any $x\in\R^N$ and $$\partial_t\psi_i := \dot{\psi_i}\in \mathbb{L}^\infty(X\times [0,a])$$ for any $i\in I$. Then the functions
$$(\vp,t) \mapsto \Xi(\vp, t):= \int_X\max_{i\in I}[\psi_i(x,t)-p_i]d\mu(x)$$ is convex  on $\R^N\times [0,a]$, and, {\em if} its $t$ derivative  $\dot{\Xi}:= \partial_t\Xi$ exists at $( \vp,t)$ then
 \be\label{17}
 \dot{\Xi}(p,t) = \sum_{i\in I}\int_{A_i(\vp,t)}\dot{\psi}_i(x,t) d\mu\ee
  Here
 \be\label{Aipt}A_i(\vp,t):= \{x\in X; \ \psi_i(x,t)-p_i>\psi_j(x,t)-p_j \ \ \forall j\not= i\} \ . \ee
\end{lemma}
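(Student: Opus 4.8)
The plan is to reduce both assertions to the pointwise behaviour of the integrand $\xi(x,\vp,t):=\max_{i\in I}[\psi_i(x,t)-p_i]$, exactly as in Lemma \ref{lemc1}, and then to push a dominated-convergence argument through the $t$-difference quotients. Convexity is immediate: for fixed $x$ each map $(\vp,t)\mapsto \psi_i(x,t)-p_i$ is convex on $\R^N\times[0,a]$, being affine in $\vp$ and convex in $t$ by hypothesis; hence $\xi(x,\cdot,\cdot)$ is a finite maximum of convex functions and therefore convex, and $\Xi=\int_X\xi\,d\mu$ inherits convexity upon integration in $x$.

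For the derivative formula the first step is a uniform Lipschitz bound in $t$. Using $|\max_i a_i-\max_i b_i|\le\max_i|a_i-b_i|$ together with the mean value theorem applied to each $t\mapsto\psi_i(x,t)$, one gets $|\xi(x,\vp,t)-\xi(x,\vp,t')|\le C|t-t'|$ for every $x$, with $C:=\max_i\|\dot\psi_i\|_\infty<\infty$. In particular all difference quotients $h^{-1}[\xi(x,\vp,t+h)-\xi(x,\vp,t)]$ are bounded by $C$ uniformly in $x$ and $h$, which supplies the dominating function for the passage to the limit.

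Next I identify the pointwise limit. Fix $(\vp,t)$ and let $S(x)$ be the set of maximizing indices. On each strict-maximizer set $A_i(\vp,t)$ of (\ref{Aipt}) the index $i$ remains the unique maximizer for all small $|h|$, so the difference quotient converges to $\dot\psi_i(x,t)$. Just as in Lemma \ref{lemc1}, the twist condition (\ref{post}) forces the ``tie set'' $T:=X\setminus\bigcup_{i\in I}A_i(\vp,t)$, where $|S(x)|\ge2$, to satisfy $\mu(T)=0$, so that $\partial_t\xi(x,\vp,t)=\dot\psi_{i(x)}(x,t)$ holds for $\mu$-a.e.\ $x$. (More generally, absent (\ref{post}), the left and right $t$-derivatives of $\xi$ on $T$ are $\min_{i\in S(x)}\dot\psi_i$ and $\max_{i\in S(x)}\dot\psi_i$; dominated convergence then gives $\partial^{\pm}_t\Xi=\int_X\partial^{\pm}_t\xi\,d\mu$, and the hypothesis that $\dot\Xi$ exists forces these one-sided values to agree, which is where that clause of the statement enters.)

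Finally, combining the uniform bound of the second step with the a.e.\ pointwise convergence of the third, the dominated convergence theorem yields
\be
\dot\Xi(\vp,t)=\lim_{h\to0}\int_X\frac{\xi(x,\vp,t+h)-\xi(x,\vp,t)}{h}\,d\mu=\int_X\partial_t\xi(x,\vp,t)\,d\mu=\sum_{i\in I}\int_{A_i(\vp,t)}\dot\psi_i(x,t)\,d\mu,
\ee
which is (\ref{17}). The only genuinely delicate points are the legitimacy of differentiating under the integral sign---secured by the uniform Lipschitz/domination bound---and the control of the measure-zero tie set on which the pointwise $t$-derivative of $\xi$ may fail to exist; the latter is precisely what the standing hypothesis (\ref{post}) (and the ``if $\dot\Xi$ exists'' proviso) is there to handle.
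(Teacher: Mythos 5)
Your proposal is correct and follows essentially the same route as the paper's own proof: the paper likewise introduces the integrand $\xi(x,\vp,t):=\max_{i\in I}[\psi_i(x,t)-p_i]$, obtains convexity of $\Xi$ by integrating a function that is convex in $(\vp,t)$ for each fixed $x$, and identifies $\dot{\xi}=\dot{\psi}_i$ on each strict-maximizer set $A_i(\vp,t)$ before differentiating under the integral sign. The difference is only one of detail: the paper compresses the justification into a reference to Lemma \ref{lemc1}, whereas you make explicit the dominating bound $\max_i\|\dot{\psi}_i\|_\infty$ on the difference quotients and the role of the tie set (the place where the hypothesis $\dot{\psi}_i\in\mathbb{L}^\infty$ and the ``if $\dot{\Xi}$ exists'' proviso genuinely enter), points the paper's two-line proof passes over in silence.
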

\begin{proof} The proof  is  basically the same as   the proof of Lemma \ref{lemc1}. Here we define
$\xi:X\times\R^N\times[0,a]\rightarrow \R$ give by $\xi(x,\vp,t)=\max_{i\in I}[\psi_i(x,t)-p_i]$, and $\Xi(\vp,t):=\int_X\xi(x,\vp,t)d\mu(x)$. Again, $\xi$ is convex on $\R^N\times[0,a]$ for any $x\in X$ so $\Xi$ is convex on $\R^N\times[0,a]$ as well, while
$$ \dot{\xi}=\left\{ \begin{array}{cc}
                                               \dot{\psi}_i(x, t) \  \  \ if  & x\in A_i(\vp,t) \\
                                               0  \ \  \ if& \exists j\not= i, \ x\in A_j(\vp,t)
                                             \end{array}\right.  \  \ \  $$
                                             implies (\ref{17}).
\end{proof}
\subsection{Proof of Theorem \ref{old}}
Note  the difference between $\Xi_{\vpsi}$ (\ref{Xino0}) and $\Xi_{\vpsi}^0$ (\ref{Xi0}). Observe that \be\label{additive}\Xi_{\vpsi}(\vp +\alpha\vec{1})=\Xi_{\vpsi}(\vp)-\alpha\mu(X) \ \ \ ; \ \ \ \vec{1}:=(1, \ldots 1)\in\R^N\ . \ee
In particular $\nabla\Xi_{\vpsi}(\vp)=\nabla\Xi_{\vpsi}(\vp+\gamma\vec{1})$  and,  in the saturated case $\vM\cdot\vec{1}=\mu(X)$:
$$ \Xi_{\vpsi}(\vp)+\vM\cdot\vp=  \Xi_{\vpsi}(\vp+\gamma\vec{1})+\vM\cdot(\vp+\gamma\vec{1})$$
for any $\gamma \in\R$.
\begin{remark}\label{rem51}
In the saturation case it follows that $\vp\mapsto \Xi_{\vpsi}(\vp)+\vp\cdot\vM$ is invariant under the shift. In addition we observe that, for $-\gamma$ large enough, such that $A_0^+(\vp+\gamma\vec{1})=\emptyset$,
$$ \Xi^0_{\vpsi}(\vp+\gamma \vec{1})=\Xi_{\vpsi}(\vp+\gamma \vec{1})$$
 (recall Remark \ref{remhadash}).  So, in the saturation case,  we may replace $\Xi^0_{\vpsi}$ by $\Xi_{\vpsi}$ and restrict the domain of $\Xi_{\vpsi}$ to
\be\label{normal} \vp\in \R^N \ \ , \ \ \vp\cdot\vec{1}=0 \ .  \ee
\end{remark}
\begin{remark}\label{rem52}
We may actually  unify the under-saturated and saturated cases (US+S). Indeed, in the under-saturated case we add the "null agent" whose wisdom $\psi_0=0$ and whose capacity is $M_0:=\mu(X)-\sum_{i\in I}M_i$. In that case we restrict our domain $(p_0, p_1, \ldots p_N)\in \R^{N+1}$ to  $p_0=0$, and we get by definition (\ref{Xi0}, \ref{Xino0})
 $$ \Xi_{\vec{0,\psi}}(0, p_1, \ldots p_N)=\Xi_{\vpsi}^0(p_1, \ldots p_N) \ $$
  Hence the proof of  the first part (saturated case), given below, includes also the proof of the under-saturated case.  The over-saturated case will be treated later  on.
\end{remark}
Let $\vA$ be any partition in
$$ {\cal P}_{\vM}:= \{ \vA; \ \mu(A_i)= M_i  \ \ \text{and} \ \  \mu(A_i\cap A_j)=0 \ \ \text{if} \ \ i\not= j\in I \ . \}$$
 Then
 \begin{multline}\label{multdual}\Sigma_{\vpsi}(\vA):= \sum_{i\in I}\int_{A_i}\psi_id\mu= \sum_{i\in I}\int_{A_i}(\psi_i-p_i)d\mu + \vM\cdot\vp  \\ \leq
 \sum_{i\in I}\int_{A_i}\max_{i\in i}(\psi_i-p_i)d\mu + \vM\cdot\vp= \int_X\max_{i\in i}(\psi_i-p_i)d\mu + \vM\cdot\vp := \Xi_{\vpsi}(\vp) +\vM\cdot\vp\ . \end{multline}
 In particular
 \be\label{dual} \bar{\Sigma}_{\vpsi}(\vM):=\sup_{\vA\in{\cal P}_{\vM}} \Sigma_{\vpsi}(\vA) \leq \inf_{\vp\in\R^N} \Xi_{\vpsi}(\vp)+\vM\cdot\vp \ . \ee
 Suppose we prove the existence of a minimizer $\vp_0$ to the right side of (\ref{dual}). Then, by Lemma \ref{lemc1} (\ref{condiff}) we get
 $$\frac{\partial \Xi_{\vpsi}}{\partial p_i}(\vp_0)=-M_i \ \  \  i\in I  .$$
 Moreover,  $\vA(\vp_0)\in {\cal P}_{\vM}$ is an optimal partition so there is an equality in (\ref{dual}). Conversely, if $\vA\in {\cal P}_{\vM}$ is an optimal partition then there is an equality in (\ref{multdual}) so
 $$ \psi_i(x)-p_{0,i} = \max_{j\in I}(\psi_j(x)-p_{0,j})  $$
for $\mu$ a.e. $x\in A_i$. Hence $A_i\subseteq A_i(\vp_0)$ (up to a negligible $\mu$ set).  By (\ref{nl}) we know that $\mu(\cup_{i\in I} A_i)=\mu(X)$ so  $A_i= A_i(\vp_0)$, again up to a negligible $\mu$ set.
 In  particular it follows that an optimal partition $\vec{\bar{A}}\in {\cal P}_{\vM}$ for $\Sigma_{\vpsi}$ is unique
 \par
 We now prove the existence of such a minimizer $\vp_0$.  Let $\vp_n$ be a minimizing  sequence of  $\vp \mapsto\Xi_{\vpsi}(\vp)-\vp\cdot\vM$, that is
$$\lim_{n\rightarrow\infty} \Xi_{\vpsi}(\vp_n)+\vp_n\cdot\vM=\inf_{\vp\in\R^N}\Xi_{\vpsi}(\vp_n)+\vp_n\cdot\vM \ . $$

 Let $\|\vp\|_2:= (\sum_{i\in I}p^2_i)^{1/2}$ be the Euclidian norm of $\vp$. If we prove that for any minimizing sequence $\vp_n$ the norms $\|\vp_n\|_2$ are uniformly bounded, then there exists  a converging subsequence whose limit is the minimizer  $\vp_0$, and we are done. This follows, in particular, since $\Xi_{\vpsi}$ is a closed (lower-semi-continuous) function.
 \par
 Assume there exists a subsequence along which $\|\vp_n\|_2\rightarrow\infty$. Let $\hat{\vp}_n:= \vp_n/\|\vp_n\|_2$.
Then
 \begin{multline}\label{try1}\Xi_{\vpsi}(\vp_n)+\vp_n\cdot\vM:= \left[\Xi_{\vpsi}(\vp_n)-\vp_n\cdot\nabla_{\vp}\Xi_{\vpsi}(\vp_n)\right] + \vp_n\cdot\left( \nabla_{\vp}\Xi_{\vpsi}(\vp_n)+\vM\right)\\
 =\left[\Xi_{\vpsi}(\vp_n)-\vp_n\cdot\nabla_{\vp}\Xi_{\vpsi}(\vp_n)\right] + \|\vp_n\|_2\hat{\vp}_n\cdot\left( \nabla_{\vp}\Xi_{\vpsi}(\vp_n)+\vM\right) \ . \end{multline}
Note  that
\be\label{Ximu0} \Xi_{\vpsi}(\vp)-\vp\cdot\nabla\Xi_{\vpsi}(\vp)= \sum_{i\in I}\int_{A_i(\vp)} \psi_id\mu\ , \ee
so, in particular
 \be\label{try2}0 \leq \int_X \min_{i\in I} \psi_i d\mu \leq \left[\Xi_{\vpsi}(\vp)-\vp\cdot\nabla_{\vp}\Xi_{\vpsi}(\vp)\right]=\sum_{i\in I} \int_{A_i(\vp)}\psi_i(x)d\mu \\ \leq \int_X \max_{i\in I} \psi_i d\mu<\infty \ . \ee
 By (\ref{try1}- \ref{try2}) we obtain, for $\|\vp_n\|_2\rightarrow\infty$,
 \be\label{lim0phat}\lim_{n\rightarrow\infty} \hat{\vp}_n\cdot\left( \nabla_{\vp}\Xi_{\vpsi}(\vp_n)+\vM\right)=0 \ . \ee
   Since $\hat{\vp}_n$ lives in the unit sphere $S^{N-1}$  (which is a compact set), there exists a subsequence for which $\hat{\vp}_n\rightarrow \hat{\vp}_0:= (\hat{p}_{0,1}, \ldots \hat{p}_{0,N})\in S^{N-1}$. Let $P_-:= \min_{i\in I} \hat{p}_{i,0}$ and $J_-:= \{ i\in I \ ; \hat{p}_{0,i}=P_-\}$.

 Note that for $n\rightarrow\infty$ along such a subsequence, $p_{n,i}-p_{n, i\prime}\rightarrow\infty$ for $i\not\in J_-, i\prime\in J_-$. It follows that $A_{i}(\vp_n)=\emptyset$ if $i\not\in J_-$ for $n$ large enough, hence $\mu(\cup_{i\in J_-}A_i(\vp_n)) =\mu(X)=\mu(X)$ for $n$ large enough. Let
 $\mu_i^n$ be the restriction of $\mu$ to $A_i(\vp_n)$. Then the limit $\mu^n_i\rightharpoonup \mu_i$ exists (along a subsequence) where $n\rightarrow\infty$.  In particular, by Lemma \ref{lemc1}
 $$\lim_{n\rightarrow\infty} \frac{\partial\Xi_{\vpsi}}{\partial p_{n,i}}(\vp_n)=-\lim_{n\rightarrow\infty} \int_X d\mu_i^n= -\int_X d\mu_i$$
 while  $\mu_i\not=0$  only if $i\in J_-$, and $\sum_{i\in J_-}\mu_i=\mu$. Since $\hat{\vp}_{0,i}=P_-$ for $i\in J_-$ is the minimal value of the coordinates of $\hat{\vp}_0$, it follows that
 $$\lim_{n\rightarrow\infty} \hat{\vp}_n\cdot\left( \nabla_{\vp}\Xi_{\vpsi}(\vp_n)+\vM\right)=-\vM\cdot\hat{\vp}_0 +P_-\sum_{i\in J_-}\int_Xd\mu_i=-\vM\cdot\hat{\vp}_0 +P_- \ . $$
Now, by definition, $\vM\cdot \hat{\vp}_0>P_-$ unless $J_-=I$. In the last case we obtain a contradiction of (\ref{normal}) since it implies $\hat{\vp}_0=0$ which contradicts $\hat{\vp}_0\in S^{N-1}$. So, if $J_-$ is a proper subset of $I$ we obtain a contradiction to (\ref{lim0phat}).
Hence $\|\vp_n\|_2$ is bounded, and a minimizer $\vp_0$ exists. This
 concludes the proof of the Theorem for the saturation  and under saturation case (c.f. Remark \ref{rem52}).
\par
To complete the proof for the over-saturation case we will show that the minimizer of (\ref{maximum}) is unique. This follows from the {\it strict concavity} of the function $\vm\mapsto \Sigma_{\vpsi}(\vm)$ on the simplex
 \be\label{simplex}S_N:=\{\vm\in\R^N_+; \vm\cdot\vec{1}\leq\mu(X)\} \ . \ee
 The proof of this strict concavity is given at the Appendix (Section \ref{appendix}).
$\Box$
\subsection{Individual values: Proof of Theorems \ref{new0}, \ref{new}, \ref{new1}}
For these we will use Lemma \ref{lemc2} in addition to Lemma \ref{lemc1}.\par
The proof of Theorem \ref{new0} is the easiest:
\begin{proof} of Theorem \ref{new0}
\begin{description}
\item{i)} Let $\vt:=(t_1, \ldots t_N)\in \R^N$. Let
$\vt\otimes\vpsi(x):=(t_1\psi_1(x), \ldots t_N\psi_N(x))$.
Consider
\be\label{Xitm}\Xi(\vt, \vp):=\Xi_{\vt\otimes \vpsi}(\vp) \ . \ee
\par
By Lemma \ref{lemc2}, $(\vt, \vp) \mapsto\Xi$ is  convex on $\R^N\times \R^N$ (in particular, convex in $\vp$ for fixed $\vt$ and convex in $\vt$ for fixed $\vp$). In addition
\be\label{pvt} \partial_{t_i}{\Xi}(\vt,\vp)=\int_{A_i(\vp, \vt)}\psi_i d\mu\equiv t_i^{-1}P_i(\vt)\ee
where
\be\label{pvt1}A_i(\vp,\vt):= \{x\in X; \ t_i\psi_i(x)-p_i<t_j\psi_j(x)-p_j \ \ \forall j\not= 1\} \ ,\ee
whenever $ \partial_{t_i}{\Xi}$ exists. Let
\be\label{XiSigma}\Sigma(\vt, \vM):= \min_{\vp\in\R^N} \Xi(\vp, \vt)+\vM\cdot \vp \ . \ee
Recall that $\Sigma(\vt,\vM)=-\infty$ unless $\vM$ is saturated. Still, it is real valued, convex as a function of $\vt$ for any saturated $\vM$. If $\vM$ is over-saturated then
$$\Sigma(\vt, \vM):= \max_{\vm\leq \vM}\min_{\vp\in\R^N} \Xi(\vp, \vt)+\vm\cdot \vp$$
is convex in $\vt$ as well.

 Then
$$ \partial_{t_i}{\Sigma}=\int_{A_i(\vp_0, \vt)}\psi_i d\mu\equiv t_i^{-1}P(\vt)$$
holds, where $\vp_0:=\vp_0(\vM,\vt)$ is the unique equilibrium price vector (perhaps up to an additive constant) guaranteed by Theorem \ref{old} for the utility vector $\vt\otimes\vpsi$.
Since the derivative of a convex function on the line is monotone non-decreasing we get for  $\vt_{(\beta)}:=(\beta, 1;\ldots 1)$, $\beta>1$
$$  P_1(\vt_{(\beta)})/\beta\equiv \partial_{\beta}{\Sigma}(\vt_{(\beta)}, \vM)\geq \partial_{s}{\Sigma}(\vt_{(s=1)}, \vM)\equiv \bar{P}_1$$
where $\tilde{P}_1\equiv P_1(\vt_{(\beta)})$ by (\ref{pvt}, \ref{pvt1}). This completes the proof for saturated and over-saturated $\vM$. The case of under-saturated $\vM$ is included in the saturated case (c.f Remark \ref{rem52}).
\item{ii)}
If $\psi_1\rightarrow \psi_1+\lambda$ then the optimal partition in the saturated and under saturated cases is unchanged.  Then
$$\tilde{P}_1:=\int_{\bar{A}_1}(\psi_1+\lambda)d\mu = \int_{\bar{A}_1}\psi_1d\mu+ \lambda\int_{\bar{A}_1}d\mu=\bar{P}_1+\lambda M_1 \ . $$
\end{description}
\end{proof}

\begin{proof}  of Theorem \ref{new}
 \begin{description}
 \item{i)} Let $\sigma:=\tilde{\psi}_1-\beta\psi_1\geq 0$, $\alpha:=\beta-1\geq 0$ and  a function $\phi:[0,1] \mapsto\R$ satisfying
  \be\label{psi2}\phi(0)=\dot{\phi}(0)=0 \ \text{and}   \ \ddot{\phi}\geq 0 \ \ \text{for any} \  t\in [0,1] \ \ , \ \phi(1)=1  \ .  \ee
  Define
 \be\label{psi1}\psi(x,t):= (1+\alpha t)\psi_1(x) + \sigma(x)\phi(t) \ . \ee
 So
 \be\label{1eqtilde}\psi(x,1)=\tilde{\psi}_1(x) \  \ee
   and  $\psi$ is convex in $t\in[0,1]$ for any $x$. Also
 $\dot{\psi}(x,t)=\alpha\psi_1(x)+\sigma(x)\dot{\phi}(t)$. Let now $\delta>0$. Then
 \be\label{newpsi}\psi(x,1)\geq \dot{\psi}(x,1)-\delta\|\sigma\|_\infty\ee
  provided
  \be\label{z1} \sigma(x)\dot{\phi}(1)\leq \sigma(x)+\psi_1(x)+\delta\|\sigma\|_\infty \ . \ee
 Since $\psi_1$ and $\sigma$ are non-negative, the later is guaranteed if $\dot{\phi}(1)\leq 1+\delta$. So, we choose
 $\phi(t):= t^{1+\eps}$ for some $\eps\in(0, \delta]$. This meets (\ref{psi2},\ref{z1}).

 Let now $\Sigma(\vM, t):= \Sigma_{\psi(, t), \psi_2, \ldots \psi_N}(\vM)$.  By Theorem \ref{old},
 $$\Sigma(\vM,t):=\inf_{\vp\in\R^N}\Xi_{\vpsi(\cdot, t)}(\vp) +\vp\cdot\vM$$
 in the US, S cases
 and  by Lemma \ref{lemc2}, $(\vp, t)\mapsto \Xi_{\vpsi(\cdot, t)}(\vp)$ is convex. So  $\Sigma$ is convex in $t$.
 In the OS case
 $$\Sigma(\vM,t):=\sup_{\vm\leq \vM}\inf_{\vp\in\R^N}\Xi_{\vpsi(\cdot, t)}(\vp) +\vp\cdot\vm$$
 is convex (as maximum of convex functions) as well. By the same Lemma
 \be\label{verified} \dot{\Sigma}(\vM, 0)=\int_{\bar{A}_1(0)}\dot{\psi}(,0)d\mu=\alpha\int_{\bar{A}_1(0)}\psi_1d\mu\equiv \alpha\bar{P}_1\ee
 where $\bar{A}_1(0)$ is the first component in the optimal partition associated with $\vpsi$, while, at $t=1$ we obtain from convexity and (\ref{newpsi})
 \be\label{alpha} \dot{\Sigma}(\vM, 1)=\int_{\bar{A}_1(1)} \dot{\psi}(x,1)d\mu \leq \int_{\bar{A}_1(1)}( \psi(x,1)+\delta\|\sigma\|_\infty)d\mu \leq \int_{\bar{A}_1(1)} \psi(x,1)d\mu +\delta\|\sigma\|_\infty \ \ee
 where $\bar{A}_1(1)$ is the first component in the optimal partition associated with \\ $\vpsi(1):= (\psi(, 1), \psi_2, \ldots \psi_N)$.  Since $\tau \mapsto \phi(\tau)$ is convex, $\tau\mapsto \Sigma(\vM,\tau)$ is convex as well by Lemma \ref{lemc2} and we get
 \be\label{beta} \dot{\Sigma}(\vM,1)\geq  \dot{\Sigma}(\vM,0) \ . \ee
 From (\ref{alpha}, \ref{beta})
 $$ \int_{\bar{A}_1(1)} \psi(x,1)d\mu \geq \alpha\bar{P}_1-\delta \|\sigma\|_\infty \ . $$

 Now, recall  $\beta:=1+\alpha$ and  $\psi(x,1):=\tilde{\psi}_1$ by (\ref{1eqtilde}), so $\int_{\bar{A}_1(1)} \psi(x,1)d\mu\equiv \tilde{P}_1$.  Since $\delta>0$ is arbitrary small, we obtain the  result.
 \item{ii)}
  Assume $N=2$, $M_1+M_2=\mu(X)$. We show the existence of non-negative, continuous  $\psi_1, \psi_2$, $x_1, x_2\in X$ and $\lambda>0$    such that, for given $\delta>0$
    \begin{description}
    \item{a)} $\Delta(x):= \psi_1(x)-\psi_2(x)< \Delta(x_1)$ for any $x\in X-\{x_1\}$.
    \item{b)}  $\Delta_\beta(x):= \beta\psi_1(x)-\psi_2(x)< \Delta_\beta(x_1)$ for any $x\in X-\{x_1\}$.
    \item{c)}   $\Delta_\beta(x_2)+\lambda =\Delta_\beta(x_1)+\delta$.
    \end{description}
    We show that  (a-c) is consistent with
    \be\label{s<}s\psi_1(x_1)>\beta\psi_1(x_2)+\lambda\ee
    for given $s>\beta-1$. \par
    Suppose (\ref{s<}) is  verified.  Let
    \be\label{theta0}\theta_0:= \left\{\begin{array}{cc}
                   1-\frac{|x-x_2|}{\eps} & \text{if} \ |x-x_2|\leq \eps \\
                   0 & \text{if} \ |x-x_2|> \eps
                 \end{array}\right.\ee
                 (assuming, for simplicity, that $X$ is a real interval).
                 Set $\tilde{\psi}_1:=\beta\psi_1+\lambda\theta_0$.  If $\eps$ is small enough then
    $\tilde{\psi}_1-\psi_2$ is maximized at $x_2$ by (b,c), while $\psi_1-\psi_2$ is maximized at $x_1$ by (a).
    Letting  $M_1<<1$ we find, by Example \ref{exAlambda}, that
    $\bar{P}_1\approx M_1\psi_1(x_1)$ and $\tilde{P_1}\approx M_1(\beta\psi_1(x_2)+\lambda\theta_0(x_2))= M_1(\beta\psi_1(x_2)+\lambda)$.
     By (\ref{s<}) we obtain the result.
     \par
     So, we have only to prove that (\ref{s<}) is consistent with (a-c). We rewrite it as
     $$\frac{s}{\beta-1} \left[ \Delta_\beta(x_1)-\Delta(x_1)\right] > \frac{\beta}{\beta-1} \left[ \Delta_\beta(x_2)-\Delta(x_2)\right]+\lambda \ . $$
    From  (c) we obtain
    $$\frac{s}{\beta-1} \left[ \Delta_\beta(x_1)-\Delta(x_1)\right] > \frac{\beta}{\beta-1} \left[ \Delta_\beta(x_2)-\Delta(x_2)\right]+\Delta_\beta(x_1)-\Delta_\beta(x_2)+\delta \ , $$
    that is
    \be\label{cucu} (s-\beta+1)\Delta_\beta(x_1)-\Delta_\beta(x_2)> (s-\beta)\Delta(x_2)+s(\Delta(x_1)-\Delta(x_2))+\delta(\beta-1) \ . \ee
    We now set $\Delta_\beta(x_1)$ and $\lambda$ large enough, keeping $\delta, \Delta_\beta(x_2),\Delta(x_1), \Delta(x_2)$ fixed. Evidently, we can do it such that (c) is preserved.
    Since $s-\beta+1>0$ by assumption, we can get  (\ref{cucu}).

 \end{description}
 \end{proof}
 \begin{proof} of Theorem \ref{new1}. \\
 \begin{description}
 \item{i)} Let $\beta=1+t$ where $t\in(0,1)$.
 We change (\ref{psi1}) into   \be\label{psi11}\psi(x,t):= (1+ t)(\psi_1(x)+\gamma) + \sigma(x)\phi(t)\ee
 and
 \be\label{magilush} \tilde{\psi}_1:= (1+t)\psi_1+\sigma\phi(t) \  \ee
 where $\gamma>0$ is a constant and $\sigma\geq 0$  on $X$.   Then
 $\dot{\psi}(x,t)=\psi_1(x)+\gamma+\sigma(x)\dot{\phi}(t)$, and we obtain
 \be\label{yomtov}\psi(x,t)\geq \dot{\psi}(x,t), \ \ t>0; \ \ \dot{\psi}(x,0)=\psi_1(x)+\gamma  \ee
 provided
  \be\label{z11} \sigma(x)\dot{\phi}(t)\leq \sigma(x)\phi(t)+t(\psi_1(x)+\gamma) \ ; \ \dot{\phi}(0)=0 \ .  \ee
 Since $\psi_1,\sigma$ are non-negative, the later is guaranteed if
 \be\label{z2} \dot{\phi}(t)\leq \phi(t)+\frac{t\gamma}{\|\sigma\|_\infty} \ ; \ \dot{\phi}(0)=0 \ .  \ee
   Since $t<1$, the choice $\phi(\tau):=\tau^{1+\eps}$ for $0\leq \tau\leq t$ and $\eps>0$ small enough (depending on $t$) verifies (\ref{z2}) provided
 \be\label{uuu}\|\sigma\|_\infty < \gamma t/(1-t) \ . \ee
 \par

  Hence we can let $\sigma$ to be any function verifying   (\ref{uuu}). Then  (\ref{psi11}, \ref{magilush}) imply
  \be\label{zirgug}(1+ t)\psi_1(x) \leq \tilde{\psi}_1(x)\leq (1+ t)\psi_1(x)+\frac{\gamma t^{2+\eps}}{1-t} \ . \ee
  Now, we note from the second part of (\ref{yomtov}) that
  \be\label{verified1} \dot{\Sigma}(\vM, 0)=\int_{\bar{A}_1(0)}\dot{\psi}(,0)d\mu=\int_{\bar{A}_1(0)}(\psi_1+\gamma)d\mu\equiv \bar{P}_1+\gamma M_1 \  \ee
 since $A_1(0)$ is independent of $\gamma$ in the S, US cases. In addition,  (\ref{psi11}, \ref{magilush},\ref{z2}) imply
  $$ \dot{\Sigma}(\vM, t)=\int_{\bar{A}_1(t)}\dot{\psi}(,t)d\mu\leq \int_{\bar{A}_1(t)}\psi(\cdot,t)d\mu= \int_{\bar{A}_1(t)}(\tilde{\psi}_1+(1+t)\gamma)d\mu \equiv \tilde{P}_1+(1+t)\gamma M_1 \ , $$
   where $\bar{A}_1(t)$ is the first component in the optimal partition associated with \\ $\vpsi(t):= (\psi(, t), \psi_2, \ldots \psi_N)$.  Since $\tau \mapsto \phi(\tau)$ is convex, $\tau\mapsto \Sigma(\vM,\tau)$ is convex as well by Lemma \ref{lemc2} and we get, as in (\ref{beta})
 \be\label{gamma} \dot{\Sigma}(\vM,t)\geq  \dot{\Sigma}(\vM,0) \ . \ee
  where, again, we used that $\bar{A}_1(t)$ is independent of $\gamma$ and $t>0$.
    Recalling $\beta:=1+ t$, let  $\lambda:=\gamma(\beta-1)^2/(2-\beta)$ and $\eps$ small enough we get (\ref{c2}, \ref{barP1}), using (\ref{zirgug},\ref{verified1}, \ref{gamma}).
  \par
   \item{ii)}
    Assume $N=2$, $M_1+M_2=1$, that $\psi_1-\psi_2$ attains its maximum at $x_1$, and $x_2\not=x_1$. Let  $\tilde{\psi}_1:=\beta\psi_1+\lambda \theta_0$ where $\theta_0$ as defined in (\ref{theta0}). We assume, as in part (ii) of the proof of Theorem \ref{new}, that $x_1$ is a maximizer of $\beta\psi_1-\psi_2$ as well.

  Next, assume
 \be\label{this1}\beta\psi_1(x_1)-\psi_2(x_1)< \lambda+\beta\psi_1(x_2)-\psi_2(x_2) \  \ee
 which implies, in particular, that $x_2$ is the maximizer of $\tilde{\psi}_1-\psi_2$ (see part (ii) of the proof of Theorem \ref{new}).
 If, in addition,
 \be\label{contra1}\psi_1(x_1)-\beta\psi_1(x_2)-\lambda-s >0\ ,  \ee
 then,
 from Example \ref{exAlambda},
 \be\label{www1}\bar{P}_1\approx \psi_1(x_1)M_1 >M_1(\tilde{\psi}_1(x_2) +s)  \approx \tilde{P}_1+sM_1
\ee
 if $M_1$ is small, and the proof is obtained.
 \par
 From (\ref{this1}) and since $x_1$ is a maximizer of $\psi_1-\psi_2$:
 $$ \lambda>(\beta-1)(\psi_1(x_1)-\psi_1(x_2))$$
 so  (\ref{contra1}) and (\ref{this1}) are compatible provided
 $$\lambda>(\beta-1)^2\psi_1(x_2)+(\beta-1)\left[\lambda+s\right] \ , $$
 namely
 \be\label{00}\lambda \frac{2-\beta}{\beta-1}>(\beta-1)\psi_1(x_2)+s \ . \ee
  Thus, if we assume further that, say,   $\psi_1(x_2)=0$ (which is consistent with the assumption that $\psi_1, \psi_2\geq 0$) then (\ref{00}) is verified for $s<\lambda(2-\beta)/(\beta-1)$.
 \end{description}
 \end{proof}
\section{Conclusions and further study}\label{differentinter}
We proved the existence of a unique, optimal partitions of a given set of consumers served by a finite set of agents of limited capacities, under certain conditions on their utilities (which were interpreted as the agent's "wisdoms"). This result enabled us to define an individual value of an agent and ask questions about the dependence of this individual value  on her wisdom, where the capacities of all agents and wisdoms of all other agents are  fixed.
\par
Our  definition of individual value can be questioned. In fact, the profit of an agent is just the price she charge per consumer, times the number of her consumers. Since, in our model, the capacity of agent $i$ is given by $M_i$ in the saturation and under saturation case, one may suggest to define her "individual value"
by
$$ V^{a}_i:= M_i p^0_i$$
where $p_i^0$ is the price of her service at equilibrium. Another possible definition is the combined profit of her consumers, which is
$$ V^{c}_i:= \int_{\bar{A}_i}(\psi_i(x)-p^0_i)d\mu$$
where $\bar{A}_i$ is the share of agent $i$ under optimal partition. The individual value as defined in this paper is just the sum
$$\bar{P}_i= V^{a}_i+ V^{c}_i \ . $$
More general definition of individual values can be obtained by different combination of these two values, such as
$$ \bar{P}^{\alpha, \beta}_i:= \alpha V^{a}_i+\beta V^{b}_i \ , $$
for given $\alpha, \beta\geq 0$.
\par
However, the method used in this paper cannot be applied directly  to the case $\alpha\not=\beta$. The reason is that all our results are strongly based on the convexity of   $\Xi:= \Xi(\vt,\vp):=\Xi_{\vt\otimes\vpsi}$    as a function of {\em all} it variables (see (\ref{Xitm})).  In fact, Theorems \ref{new0}-\ref{new1} are all based on the identity $\bar{P}_i:=P_i(\vec{1},\vec{M})$ for the individual value,   where
$$ P_i(\vec{t}, \vec{M})= \frac{\partial \Sigma(\vt, \vM)}{\partial t_i} \   $$
and  $\Sigma(\vt,\vM):= \min_{\vp\in\R^N}\Xi(\vt, \vp)+\vp\cdot\vM$.
\par
In order to extend our results to other definitions of individual value, say $V^a_i$, we my use the duality
$p_i^0=\partial\Sigma/ \partial M_i$ at $\vt=\vec{1}$. Thus $V^a_i:= V_i^a(\vec{1},\vM)$ where
$$  V_i^a(\vec{t},\vM):= M_i\partial_{M_i} \Sigma(\vt, \vM) \ $$
and $V^b_i:= V_i^b(\vec{1}, \vM)$ where
$$ V_i^b(\vt, \vM)=P_i(\vt,\vM)-V^a_i(\vt,\vM)\equiv   \left(\partial_{t_i}-M_i\partial_{M_i}\right)\Sigma
 \ .  $$
The dependence of $V^a_i, V^b_i$ on the wisdom $\psi_i$ for prescribed capacities $\vM$  is related to the behaviour of the functions $t_i\mapsto V_i^a, V_i^b$ at fixed $\vM$.
\par
The difficulty  in analyzing the dependence of either $V^a_i$ or $V^c_i$ on $\psi_i$  is originated from the fact that we do not have any general estimate on the dependence of the partial $M_i$ derivative of $\Sigma$ as functions of $t_i$. All we know about $\Sigma$ is that it is convex in $\vt$ and concave in $\vM$. On the other hand, the dependence of the individual value $\bar{P}_i$, as defined in this paper, is known due to the convexity of $\Sigma$ in the variables $\vt$. Anyway, the analysis of the dependence of $V^a_i, V^b_i$ on $\psi_i$ is worth studying, perhaps by numerical methods.
\vskip .3in\noindent

 Acknowledgment: I would like to thank H. Brezis and R. Holtzmann  for his helpful suggestion which contributed to the improvement of the results.

\section{Appendix}\label{appendix}
We prove that the function $\vm\mapsto\Sigma_{\vpsi}(\vm)$ as defined in (\ref{maximum}) is strictly concave on the simplex (\ref{simplex}).
To prove this we recall some basic elements form convexity theory (see, e.g. \cite{A}, \cite{BC}):
  \begin{description}
  \item{i)} \ If $F$ is a convex function on $\R^N$ (say), then  the  sub gradient $\partial F$ at point $p\in\R^N$ is defined as follows: $\vq\in \partial F(\vp)$ if and only if
 $$ F(\vp\prime)-F(\vp)\geq \vq\cdot(\vp\prime-\vp) \ \ \ \forall \vp\prime\in\R^N \ . $$
\item{ii)} The Legendre transform of $F$:
$$ F^*(\vq):= \sup_{\vp\in\R^N} \vp\cdot\vq-F(\vp) \ ,$$
and $Dom(F^*)\subset\R^N$ is the set on which $F^*<\infty$.
  \item{iii)} The function $F^*$ is convex (and closed), but $Dom(F^*)$ can be a proper subset of $\R^N$ (or even an empty set).
  \item {iv)} The subgradient of a convex function is non-empty (and convex) at any point in the proper domain of this function (i.e. at any point in which the function takes a value in $\R$).
\item{v)} \
Young's  inequality
$$ F(\vp)+F^*(\vq)\geq \vp\cdot\vq$$
holds for any pair of points $(\vp, \vq)\in \R^N\times\R^N$. The equality holds iff $\vq\in\partial F(\vp)$, iff $\vp\in\partial F^*(\vq)$.
\item{vi)} \
The Legendre transform is involuting, i.e $F^{**}=F$ if $F$ is convex and closed.
\end{description}
Returning to our case, let $F(\vp):=\Xi^0_{\vpsi}(\vp)$. It is a convex function on $\R^N$ by Lemma \ref{lemc1}. Moreover, its partial derivatives exists at any point in $\R^N$, which implies that its  sub-gradient is s singleton.   Recalling (\ref{s000}) with $\Xi_{\vpsi}$  we get that
  $$F^*(\vm)\equiv -\Sigma_{\vpsi}(-\vm) $$
  takes finite values  only on the simplex (\ref{simplex}).
 So, we only have to prove the existence of a unique minimizer (\ref{maximum}) on the set $S_N\cap \{\vm\leq \vM\}$, which is a convex set as well. We prove below that $F^*$ is {\em strictly convex} on the simples. This implies the strict concavity of $\Sigma_{\vpsi}$ on the same simplex.

Assume $F^*$ is not strictly convex. It means there exists $\vm_1\not= \vm_2\in S_N$ for which \be\label{just}F^*(\frac{\vm_1+\vm_2}{2})=\frac{F^*(\vm_1)+F^*(\vm_2)}{2} \ . \ee
Let $\vm:=\vm_1/2+\vm_2/2$, and $\vp\in \partial F^*(\vm)$. Then, by (iv, v)
\be\label{just1} 0=F^*(\vm)+ F^{**}(\vp)-\vp\cdot\vm=F^*(\vm) +F(\vp)-\vp\cdot\vm \ . \ee
By (\ref{just}, \ref{just1}):
$$\frac{1}{2}\left( F^*(\vm_1)+F(\vp)-\vp\cdot\vm_1 \right)+ \frac{1}{2}\left( F^*(\vm_2)+F(\vp)-\vp\cdot\vm_2 \right)=0$$
while (v) also guarantees
$$F^*(\vm_i)+F(\vp)-\vp\cdot\vm_i\geq 0 \ \ \ , \ \ i=1,2 \ . $$
It follows
$$F^*(\vm_i)-F(\vp)+\vp\cdot\vm_i= 0 \ \ \ , \ \ i=1,2 \ , $$
so, by (v) again,  $\{\vm_1, \vm_2\}\in \partial F(\vp)$, and we get a contradiction since $\partial F(\vp)$ is a singleton.

\end{document}